\documentclass{amsart}

% This is the final version, including typos found by the AMS.  In places 
% where I disagreed with their style, but not enough to argue with them, 
% I have not made their changes here.

%\usepackage{graphics,verbatim, amsmath, amssymb, amsthm, amsfonts, epsfig, psfrag}	
\usepackage{ifthen}

\newtheorem{proposition}{Proposition}[section]
\newtheorem{theorem}[proposition]{Theorem}

\newtheorem{lemma}[proposition]{Lemma}
\newtheorem{prop}[proposition]{Proposition}
\newtheorem{cor}[proposition]{Corollary}

\theoremstyle{definition}

\theoremstyle{remark}
\newtheorem{remark}[proposition]{Remark}

\numberwithin{equation}{section}

% This is for setting off words we define in a separate typeface.
\newcommand{\newword}[1]{\textbf{\textit{#1}}}

\newcommand{\reals}{\mathbb R}

\newcommand{\ep}{\epsilon}
\newcommand{\set}[1]{{\lbrace #1 \rbrace}}
\newcommand{\Set}[1]{{\big\lbrace #1 \big\rbrace}}

\newcommand{\Aff}{{\operatorname{Aff}}}
\newcommand{\Perp}{{\operatorname{Perp}}}
\renewcommand{\int}{{\operatorname{int}}}
\newcommand{\relint}{{\operatorname{relint}}}
\newcommand{\A}{{\mathcal A}}
\newcommand{\B}{{\mathcal B}}
\newcommand{\C}{{\mathcal C}}

\newcommand{\F}{{\mathcal F}}
\newcommand{\oC}{\,\overline{\mathcal{\!C}}}
\newcommand{\oE}{\,\overline{\mathcal{\!E}}}
\newcommand{\oF}{\,\overline{\!F}}
\newcommand{\oG}{\,\overline{\mathcal{\!G}}}
\newcommand{\oM}{\,\overline{\mathcal{\!M}}}
\newcommand{\oX}{\,\overline{\!X}}
\newcommand{\E}{{\mathcal E}}
\newcommand{\G}{{\mathcal G}}
\renewcommand{\L}{{\mathcal L}}
\newcommand{\M}{{\mathcal M}}
\renewcommand{\P}{{\mathcal P}}
\newcommand{\R}{{\mathcal R}}

\newcommand{\Z}{{\mathbf Z}}

\newcommand{\Supp}{\operatorname{Supp}}
\newcommand{\rank}{\operatorname{rank}}

\author{Nathan Reading}
\address{Department of Mathematics, North Carolina State University, Raleigh, NC, USA}

\subjclass[2010]{52B99, 52C35}

\thanks{The author was partially supported by NSA grant H98230-09-1-0056.}

\title{Coarsening polyhedral complexes}

\begin{document}

\begin{abstract}
Given a pure, full-dimensional, locally strongly connected polyhedral complex~$\C$, we characterize, by a local codimension-$2$ condition, polyhedral complexes that coarsen~$\C$.
The proof of the characterization draws upon a general shortcut for showing that a collection of polyhedra is a polyhedral complex and upon a property of hyperplane arrangements which is equivalent, for Coxeter arrangements, to Tits' solution to the Word Problem.
The motivating special case, the case where~$\C$ is a complete fan, generalizes a result of Morton, Pachter, Shiu, Sturmfels, and Wienand that equates convex rank tests with semigraphoids. 
We also prove oriented matroid versions of our results, obtaining, as a byproduct, an oriented matroid version of Tietze's convexity theorem.
\end{abstract}

\maketitle

\section{Summary of results}\label{intro}
The purpose of this paper is to characterize the polyhedral complexes~$\C'$ that coarsen a given polyhedral complex~$\C$.
A \newword{polyhedron} is an intersection of finitely many closed halfspaces.
A \newword{polyhedral complex} is a finite, nonempty collection~$\C$ of polyhedra such that (1) if $F\in\C$ and~$G$ is a face of~$F$, then $G\in\C$, and (2) if~$F$ and~$G$ are in~$\C$, then $F\cap G$ is a face of~$F$ and a face of~$G$.
The polyhedra in~$\C$ are called the \newword{faces} of~$\C$.
A \newword{fan} is (the set of nonempty faces of) a polyhedral complex all of whose nonempty faces contain the origin.
Details on polyhedra, polyhedral complexes and fans can be found, for example, in~\cite{Ziegler}.
We typically shorten ``polyhedral complex'' to ``complex.''
The \newword{dimension} of $\C$ is the maximum of the dimensions of its faces, and $\C$ is called \newword{pure} if all of its maximal faces have the same dimension.
The \newword{support} $\Supp(\C)$ of a collection~$\C$ of polyhedra is the union of the polyhedra in the collection.
A complex is \newword{complete} if its support is the entire ambient space.
A complex~$\C'$ \newword{coarsens} a complex~$\C$ if~$\C'$ and~$\C$ have the same support and if each face of~$\C'$ is a union of faces of~$\C$.

Let~$\C$ be a pure, $n$-dimensional polyhedral complex in $\reals^n$.
The \newword{adjacency graph}~$\G$ of~$\C$ is the graph whose vertices are the full-dimensional faces of~$\C$ and whose edges are the pairs of adjacent full-dimensional faces (pairs of full-dimensional faces whose intersection is a codimension-$1$ face).
For each face $F$ of $\C$, the \newword{local adjacency graph of $\C$ at $F$} is the subgraph of $\G$ induced by maximal faces of $\C$ containing $F$.
The complex $\C$ is \newword{locally strongly connected} if, for every face $F$, the local adjacency graph of $\C$ at $F$ is connected.
Given a complex~$\C'$ coarsening~$\C$, define the \newword{edge set of~$\C'$} to be the set of edges~$M$---$N$ in~$\G$ such that~$M$ and~$N$ are contained in the same face of~$\C'$.
A complex~$\C'$ coarsening~$\C$ is uniquely determined by its edge set and vice versa.  
Thus to characterize complexes coarsening~$\C$, we give a necessary and sufficient local condition for a set~$\E$ of edges of~$\G$ to be the edge set of a complex that coarsens~$\C$.

Let $F$ be a codimension-$2$ face of $\C$ (often called a \newword{ridge} of $\C$) and let $\P$ be the local adjacency graph of $\C$ at $F$.
Let $\Aff(F)$ denote the \newword{affine hull} of~$F$, the intersection of all affine hyperplanes containing~$F$.
This is an affine subspace of codimension $2$.
Let $\Perp(F)$ be the unique linear subspace orthogonal to $\Aff(F)$.
This $2$-dimensional plane is the orthogonal complement of the linear subspace $\Aff(F)-p$, where $p$ is any point in $\Aff(F)$.
Choose a point~$x$ in the relative interior of~$F$.
For each face~$M$ in~$\P$, define a cone $\set{v\in \Perp(F):\,\exists\,\ep>0\mbox{ with }x+\ep v\in M}$.
The cones that arise in this way are the maximal cones of a fan $\C|_F$ in the plane $\Perp(F)$.
The adjacency graph of $\C|_F$ is~$\P$.
A set~$\E$ of edges in~$\G$ has the \newword{ridge property} if, for every codimension-$2$ face~$F$ of $\C$, the restriction of~$\E$ to~$\P$ is the edge set of a fan in $\Perp(F)$ coarsening $\C|_F$.
We will prove the following theorem.

\begin{theorem}\label{coarsen}
Let~$\C$ be a pure, full-dimensional, locally strongly connected polyhedral complex and let~$\G$ be the adjacency graph on maximal faces of~$\C$.
Then a subset~$\E$ of the edges of~$\G$ is the edge set of a complex coarsening~$\C$ if and only if~$\E$ has the ridge property.
\end{theorem}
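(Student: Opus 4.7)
My plan is to prove necessity first (easy direction) and then construct the coarsening explicitly from the polygon property.

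For the necessary direction, suppose $\C'$ coarsens $\C$ with edge set $\E$. Fix a codimension-$2$ face $F$ of $\C$ not in $\partial\Supp(\C)$, choose $x\in\relint(F)$, and consider the faces of $\C'$ containing $x$. Intersecting these with a small disk in $\Perp(F)$ centered at $x$ yields a complete fan of $\Perp(F)$ that coarsens $\C|_F$, and by construction two adjacent maximal faces $M,N$ of $\C$ in the polygon $\P$ lie in the same sector of this fan precisely when they lie in a common face of $\C'$, i.e.\ precisely when $M\text{---}N\in\E$. So the restriction of $\E$ to $\P$ has the required form.

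For sufficiency, given an $\E$ with the polygon property I would define an equivalence relation $\sim$ on the vertices of $\G$ generated by $\E$: two maximal faces are equivalent iff they are connected by a path of edges in $\E$. For each equivalence class~$[M]$, let $P_{[M]}=\bigcup_{N\in[M]}N$. I expect the coarsening $\C'$ to consist of the sets $P_{[M]}$ together with all their faces, and the work has three parts: (1) show each $P_{[M]}$ is convex (hence a polyhedron, as a finite union of same-dimension polyhedra with convex union); (2) verify that the collection is a polyhedral complex; (3) check that its edge set is exactly~$\E$.

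The main obstacle is (1). The natural strategy is to show that the straight segment between any two interior points of $P_{[M]}$ stays inside $P_{[M]}$. After perturbing to a generic segment that meets $\C$ only transversally in codimension-$1$ faces, one gets a sequence of adjacency edges of $\G$, and the task becomes showing that every such sequence consists entirely of edges in $\E$. This is where I expect the abstract's hyperplane-arrangement shortcut (the Tits-like property) to enter: I would argue that any two ``generic polygonal paths'' between the same pair of maximal faces can be connected by a sequence of elementary moves, each supported at a single codimension-$2$ face~$F$, that replace one arc of the path going around~$F$ by the complementary arc. The polygon property at~$F$ guarantees that such a local move preserves membership in $\E$; combining this with connectivity of $[M]$ by an $\E$-path gives that \emph{every} generic path between two members of $[M]$ is an $\E$-path. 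Deforming the segment $[x,y]$ from any fixed $\E$-path between the cells containing $x$ and $y$ then shows $[x,y]\subseteq P_{[M]}$.

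Once convexity is in hand, part~(2) should follow from the shortcut alluded to in the abstract: rather than checking directly that every pairwise intersection of the $P_{[M]}$'s is a face of each, one reduces to a local condition that is automatic from the construction, since the faces of each $P_{[M]}$ are unions of faces of $\C$ and adjacency across codimension-$1$ walls is controlled by~$\E$. Part~(3) is essentially by construction: if $M\text{---}N\in\E$ then $M\sim N$ so they lie in the same $P_{[M]}$; conversely, if adjacent $M,N$ lie in a common face of $\C'$, applying the polygon property at their shared codimension-$1$ wall (viewed within any incident codimension-$2$ face, or directly from the local fan structure there) forces $M\text{---}N\in\E$.
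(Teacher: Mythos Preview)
Your architecture matches the paper's: necessity directly, then for sufficiency form the unions $P_{[M]}$, prove convexity via a Tits-type path argument, invoke the polyhedral-complex shortcut, and verify the edge set. But two steps are not as automatic as you suggest.

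The main gap is step~(2). The shortcut (Corollary~\ref{shortcut convex}) requires that whenever $P_{[M]}$ and $P_{[N]}$ meet in codimension~$1$, their intersection is an \emph{entire} facet of each. A priori $P_{[M]}\cap P_{[N]}$ is only a union of some codimension-$1$ faces of $\C$ lying inside a facet $F$ of $P_{[M]}$; showing it fills all of $F$ is a separate argument (Lemma~\ref{codim 1 intersections}) that uses the \emph{full} polygon property, not merely the half-polygon (``weak'') version that makes braid moves preserve $\E$-paths. One propagates from a codimension-$1$ face $G\subseteq P_{[M]}\cap P_{[N]}$ to an adjacent codimension-$1$ face $G'\subseteq F$ through the polygon at the codimension-$2$ face $G\cap G'$: convexity of $P_{[M]}$ places $k-1$ consecutive edges of that $2k$-gon in $\E$, and the polygon property then forces the opposite $k-1$ edges into $\E$, giving $G'\subseteq P_{[N]}$. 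This is exactly where the polygon property does more work than what your convexity argument consumed.

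Your step~(3) is also off: the polygon property lives at codimension-$2$ faces, so ``applying the polygon property at their shared codimension-$1$ wall'' has no content. The correct argument simply reuses step~(1): if adjacent $M,N$ lie in the same $P_{[M]}$, the one-step path $M,N$ is reduced, and path convexity transforms any $\E$-path from $M$ to $N$ into it via braid and nil moves (you omit nil moves, but they are needed and trivially preserve $\E$-paths), forcing $M$---$N\in\E$. One more point: the paper carries out the path-convexity argument only when $\C$ is a subcomplex of a hyperplane-arrangement complex $\C(\A)$---where polygons are $2k$-gons and braid moves make sense---and then reduces the general case by refining $\C$ to the arrangement generated by its facet hyperplanes and transporting $\E$ to an edge set $\oE$ on the refinement; you should make this reduction explicit rather than treat arbitrary $\C$ directly.
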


One key ingredient in the proof of Theorem~\ref{coarsen} is a shortcut for proving that a collection of polyhedra is a polyhedral complex.
Given a collection~$\M$ of polyhedra, for each integer $k\ge -1$, let $\bigcap_k(\M)$ be the union of all intersections $M\cap N$ such that $M,N\in\M$ and $\dim(M\cap N)\le k$.
(By convention, the empty set has dimension~$-1$.)
For any $x\in\reals^n$ and $\delta>0$, let $\B_\delta(x)$ be the open ball $\set{y\in\reals^n:|x-y|<\delta}$.
Although the proof of Theorem~\ref{coarsen} only needs a special case of the following theorem (the case where $\M$ consists of $n$-dimensional polyhedra and $k=n-2$), it is not significantly harder to prove the more general statement.

\begin{theorem}\label{shortcut}
Fix $k\ge -1$ and let~$\M$ be a finite collection of polyhedra in $\reals^n$, each of dimension greater than~$k$.
Suppose:
\begin{enumerate}
\item[(i) ] For all $x\in\Supp(\M)$, there exists $\ep>0$ such that, for all $\delta$ with $\ep>\delta>0$, the set $\left[\Supp(\M)\setminus\bigcap_k(\M)\right]\cap\B_\delta(x)$ is path connected; and
\item[(ii) ] $M\cap N$ is a face of~$M$ and of~$N$ for all $M,N\in\M$ with $\dim(M\cap N)>k$.
\end{enumerate}
Then the collection of all polyhedra in~$\M$ and their faces is a polyhedral complex.
\end{theorem}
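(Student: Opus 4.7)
The plan is to verify the two defining conditions of a polyhedral complex for $\C$. Closure under taking faces is immediate from the definition of $\C$ together with transitivity of the face relation on polyhedra. For the intersection condition, a standard reduction shows it suffices to prove, for every $M,N\in\M$, that $M\cap N$ is a face of $M$ and of $N$: if $F$ is a face of $M$ and $G$ is a face of $N$, then $F\cap G\subseteq M\cap N$, so $F\cap G=[F\cap(M\cap N)]\cap[G\cap(M\cap N)]$ is the intersection of two faces of the polyhedron $M\cap N$; this is then a face of $M\cap N$ and, via faces-of-faces, a face of both $F$ and $G$.

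For the reduced claim, the case $\dim(M\cap N)>k$ is hypothesis~(ii), and $M\cap N=\emptyset$ is trivial. The remaining case is $M\cap N\ne\emptyset$ with $\dim(M\cap N)\le k$. Fix $y\in\relint(M\cap N)$ and choose $\delta>0$ small enough that only polyhedra of $\M$ containing $y$ meet $\B_\delta(y)$. Because $\dim M,\dim N>k$ while $\bigcap_k(\M)$ is a finite union of sets of dimension at most $k$, we can pick points $p\in\relint(M)\cap\B_\delta(y)$ and $q\in\relint(N)\cap\B_\delta(y)$ that lie outside $\bigcap_k(\M)$, and by~(i) connect them by a path $\gamma$ in $[\Supp(\M)\setminus\bigcap_k(\M)]\cap\B_\delta(y)$. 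Using a Lebesgue-number argument, subdivide $\gamma$ into finitely many sub-arcs $\gamma|_{[t_i,t_{i+1}]}$, each contained in a single polyhedron $P_i\in\M$; this produces a chain $M=P_0,P_1,\ldots,P_r=N$ with each $P_i$ containing $y$ (by the choice of $\delta$) and each transition point $\gamma(t_{i+1})\in P_i\cap P_{i+1}$ lying outside $\bigcap_k(\M)$, so that $\dim(P_i\cap P_{i+1})>k$ and by~(ii) this intersection is a face of both $P_i$ and $P_{i+1}$.

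For a polyhedron $P\ni y$, let $F_P^y$ denote the smallest face of $P$ containing $y$. Since $P_i\cap P_{i+1}$ is a face of $P_i$ containing $y$, we have $F_{P_i}^y\subseteq P_i\cap P_{i+1}$, and in fact $F_{P_i}^y$ is the smallest face of the polyhedron $P_i\cap P_{i+1}$ containing $y$; the same holds for $F_{P_{i+1}}^y$, forcing $F_{P_i}^y=F_{P_{i+1}}^y$. Telescoping yields a common face $F^y:=F_M^y=F_N^y\subseteq M\cap N$ that is a face of both $M$ and $N$. If $\dim F^y<\dim(M\cap N)$, pick $v\in\Aff(M\cap N)-y$ outside $\Aff F^y-y$ and set $y'=y+\ep v$ for $\ep>0$ small enough that $y'\in\relint(M\cap N)$ and the smallest face of $M$ containing $y'$ has stabilized to a single face $F'$ whose closure contains $y$. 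Then $F^y$ is properly contained in $F'$ (since $y'\in F'\setminus\Aff F^y$), and the chain argument at $y'$ identifies $F'$ with $F_M^{y'}=F_N^{y'}$, which is therefore contained in $M\cap N$. Iterating strictly increases the face dimension and must terminate with a face $F^*$ of $M$ contained in $M\cap N$ of dimension $\dim(M\cap N)$.

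Writing $F^*=M\cap H$ for a supporting hyperplane $H$ of $M$, the equality of affine hulls $\Aff F^*=\Aff(M\cap N)$ forces $\Aff(M\cap N)\subseteq H$, hence $M\cap N\subseteq M\cap H=F^*$, giving $F^*=M\cap N$; so $M\cap N$ is a face of $M$, and, by symmetry, of $N$. The main technical obstacle is the dimension-increasing step: establishing that the smallest face of $M$ containing the perturbed point $y+\ep v$ stabilizes for small $\ep$ to a face whose relative interior contains $y+\ep v$ and whose closure contains $y$, and hence strictly contains $F^y$. This should follow from the finiteness of the face lattice of $M$ and a short analysis of how the face containing a moving point changes as the point approaches~$y$.
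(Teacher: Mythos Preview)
Your argument is correct and shares the paper's overall strategy: reduce (via what the paper states as Lemma~2.1) to showing that $M\cap N$ is a face of both $M$ and $N$ for every $M,N\in\M$, then use hypothesis~(i) to connect $\relint M$ to $\relint N$ by a path avoiding $\bigcap_k(\M)$ inside a small ball about a basepoint in $M\cap N$, and extract from the path a chain $M=P_0,\ldots,P_r=N$ with each $P_i$ containing the basepoint and $\dim(P_i\cap P_{i+1})>k$.

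The divergence is in how the chain is exploited. The paper simply intersects it: one shows by a short induction that $F'=P_0\cap\cdots\cap P_r$ is a face of $P_r=N$, and---crucially---the basepoint $x\in F=M\cap N$ is chosen at the outset to avoid every polyhedron in $\C$ of dimension below $d=\dim F$, so that $x\in F'$ forces $\dim F'\ge d$ and hence $F'=F$ in one stroke. You instead pass the ``smallest face containing $y$'' along the chain to obtain $F^y=F_M^y=F_N^y\subseteq M\cap N$, and then iterate by perturbing $y$ to climb in dimension. The stabilization you flag as the main obstacle is indeed routine (any face of $M$ not containing $y$ lies a positive distance from $y$, hence from $y'=y+\epsilon v$ for small $\epsilon$, so $F_M^{y'}\ni y$ and thus $F_M^{y'}\supsetneq F^y$), so your argument goes through. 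But the whole iteration is avoidable: had you also required $y$ to miss every face of $M$ and of $N$ of dimension below $d$---possible, since finitely many lower-dimensional sets cannot cover the $d$-dimensional set $\relint(M\cap N)$---your $F^y$ would already have dimension $d$ and the proof would conclude immediately, matching the paper's cleaner route.
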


Let $\C(\A)$ be the complete polyhedral complex determined by an affine hyperplane arrangement~$\A$. 
In Section~\ref{edge sec}, we prove Theorem~\ref{coarsen}, beginning with the special case where~$\C$ is a pure, full-dimensional, locally strongly connected subcomplex of $\C(\A)$.
The proof relies on the observation, discussed in Section~\ref{path sec}, that every hyperplane arrangement has a property that we call path convexity.
When~$\A$ is a Coxeter arrangement, the statement that~$\A$ is path convex is exactly the statement of Tits' solution \cite[Th\'{e}or\`{e}me~3]{Tits} to the Word Problem for the corresponding Coxeter group.
Essentially equivalent observations have been made in various other settings \cite{CorMor,Deligne,Salvetti}.

Key parts of the proof of Theorem~\ref{coarsen} are shared by proofs of the less general results \cite[Proposition~5.2]{con_app} and \cite[Theorem~9]{ranktests}.
The former, for a broad class of central hyperplane arrangements~$\A$, uses a stronger condition than the ridge property, arising from the lattice theory of the weak order, to show that certain sets of edges of the adjacency graph $\G(\A)$ are the edge sets of fans coarsening the fan $\C(\A)$.
The latter establishes Theorem~\ref{coarsen} in the case $\C=\C(\A)$, where~$\A$ is the Coxeter arrangement for the symmetric group (i.e.\ the braid arrangement).
For this~$\A$, the complex $\C(\A)$ is the normal fan of the permutohedron, and fans coarsening $\C(\A)$ are interpreted in the language of nonparametric statistics as \newword{convex rank tests}.
The edge sets of fans coarsening $\C(\A)$ are characterized by the \newword{square axiom} and the \newword{hexagon axiom}.
Furthermore, edge sets satisfying the square axiom and the hexagon axiom are identified with certain conditional independence structures known as \newword{semigraphoids}. 

Theorem~\ref{coarsen} in particular solves the problem, posed in \cite[Section~1]{ranktests},  of characterizing the edge sets of coarsenings of $\C(\A)$ for arbitrary Coxeter arrangements $\A$.
Indeed, the theorem is particularly simply stated when $\C=\C(\A)$ for any central hyperplane arrangement $\A$, as we now explain.

Let $\Z$ be a zonotope.
A set~$\E$ of edges of~$\Z$ has the \newword{polygon property} if, for every $2k$-gonal face $P$ of $\Z$, whenever~$\E$ contains any $k-1$ consecutive edges of $P$, then~$\E$ also contains the opposite $k-1$ consecutive edges of~$P$.
The polygon property on the usual permutohedron coincides with the square and hexagon axioms from~\cite{ranktests}.
When $\A$ is the central hyperplane arrangement dual to $\Z$ and $\C$ is $\C(\A)$ (the normal fan to~$\Z$), then the $2$-dimensional fans $\C_F$ are defined by an arrangement of lines through the origin in $\Perp(F)$.
Thus the ridge property reduces to the polygon property, and we have the following corollary to Theorem~\ref{coarsen}.
\begin{cor}\label{coarsen zonotope}
Let $\Z$ be a zonotope and let $\F$ be the normal fan of $\Z$.
Then a set $\E$ of edges of $\Z$ is the edge set of a fan coarsening $\F$ if and only if $\E$ has the polygon property.
\end{cor}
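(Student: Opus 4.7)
The plan is to deduce Corollary~\ref{coarsen zonotope} from Theorem~\ref{coarsen} applied with $\C = \C(\A)$, where $\A$ is the central hyperplane arrangement dual to $\Z$. Under this correspondence, $\F$ equals $\C(\A)$, the $1$-skeleton of $\Z$ is identified with the adjacency graph $\G$ of $\F$, and each $2k$-gonal $2$-face $P$ of $\Z$ corresponds to a codimension-$2$ face $F$ of $\F$ whose associated polygon $\P$ in $\G$ is precisely the cycle of edges of $P$. The local fan $\C|_F$ in $\Perp(F)$ is cut out by the hyperplanes of $\A$ containing $F$; since $\A$ is central, their traces in $\Perp(F)$ are $k$ distinct lines through the origin, dividing the plane into $2k$ cyclically arranged sectors. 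The $2k$ rays of $\C|_F$ then correspond cyclically to the edges of $P$, with opposite rays (those lying on a common line) corresponding to opposite edges of $P$.

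By Theorem~\ref{coarsen}, $\E$ is the edge set of a fan coarsening $\F$ if and only if, for every such $F$, the restriction of $\E$ to $\P$ is the edge set of a fan coarsening $\C|_F$. The problem thus reduces to a purely $2$-dimensional verification: for the centrally symmetric fan $\C|_F$, a subset of its rays can be removed to produce a valid fan coarsening if and only if, whenever $k-1$ cyclically consecutive rays are removed, the opposite $k-1$ consecutive rays are also removed. The key geometric fact underpinning this equivalence is that in a central arrangement of $k$ lines the $2k$ sector extents have period $k$, so any $k$ consecutive sectors sum to exactly $\pi$; removing $k-1$ consecutive rays therefore merges $k$ consecutive sectors into a half-plane bounded by one of the $k$ lines, say $\ell$.

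For the ``only if'' direction, if the half-plane on the opposite side of $\ell$ is subdivided by any remaining ray, a piece of it meets the first half-plane along a proper sub-ray of $\ell$, which is not a face of a half-plane, violating the polyhedral complex axiom; hence the opposite $k-1$ rays must also be removed. The main obstacle is the converse: one checks that under the zonotopal polygon property, any cyclically consecutive run of $k$ or more removed rays forces, by iterating the property on length-$(k-1)$ sub-windows, the removal of all $2k$ rays, collapsing $\C|_F$ to the trivial fan whose sole maximal face is $\Perp(F)$. The remaining configurations have only runs of length strictly less than $k-1$, or a length-$(k-1)$ run paired antipodally with its opposite; in each case the merged regions---sectors of angular extent less than $\pi$, or antipodally paired half-planes sharing a bounding line---are readily checked to satisfy the face-intersection axiom. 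Theorem~\ref{coarsen} then yields Corollary~\ref{coarsen zonotope}.
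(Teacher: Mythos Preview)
Your proposal is correct and follows the same route as the paper: apply Theorem~\ref{coarsen} with $\C=\C(\A)$ for the central arrangement $\A$ dual to $\Z$, and then identify the polygon property with the zonotopal polygon property via the $2$-dimensional analysis of $\C|_F$. The paper treats this identification as immediate (the paragraph before Corollary~\ref{coarsen zonotope}, and later Lemma~\ref{poly prop arr}), whereas you spell out the case analysis; your extra detail is sound and the minor point you leave implicit---that a run of length $\ge k$ in a valid coarsening is impossible unless all rays are removed, since otherwise a maximal face would be a nonconvex union of more than $k$ sectors---is easily supplied.
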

When $\A$ is a non-central hyperplane arrangement, the analogous polygon property still characterizes coarsenings of $\C(A)$ for the same reason.

The proof of Theorem~\ref{coarsen} also provides a local condition (Theorem~\ref{Tietze lite}) for an interior-connected union of polyhedra to be convex, which is a special case of Tietze's convexity theorem~\cite{Tietze}.
(See \cite[Part~IV.C]{Valentine}.)
In Section~\ref{OM sec}, we extend all of our results to the context of oriented matroids, proving, in particular, an oriented matroid version (Theorem~\ref{Tietze OM}) of Tietze's convexity theorem.

\section{Polyhedral complexes}\label{complex sec}
In this section, we prove Theorem~\ref{shortcut}.
We begin by establishing a well-known, easier result.
(See, for example, \cite[Lemma~14]{ranktests} or \cite[Lemma~3.2]{con_app}.)

\begin{lemma}\label{longer shortcut}
Let~$\M$ be a finite collection of polyhedra and let~$\C$ be the collection consisting of all polyhedra in~$\M$ and their faces.
Suppose $M\cap N$ is a face of~$M$ and of~$N$ for all $M,N\in\M$.
Then~$\C$ is a polyhedral complex.
\end{lemma}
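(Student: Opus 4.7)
The plan is to verify the two defining axioms of a polyhedral complex. Closure under taking faces, axiom (1), is immediate: any face in $\C$ is a face of some $M\in\M$, and a face of a face of $M$ is again a face of $M$, hence belongs to $\C$. All the work lies in axiom (2): for any $F,G\in\C$, we must show that $F\cap G$ is a face of $F$ and of $G$. Writing $F$ as a face of some $M\in\M$ and $G$ as a face of some $N\in\M$, the hypothesis supplies the fact that $M\cap N$ is a face of both $M$ and $N$, and the problem is to transfer this property down to the faces $F$ and $G$.

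The central auxiliary claim I would prove is the following: if $P$ and $Q$ are polyhedra such that $P\cap Q$ is a face of $P$ and of $Q$, then for any face $F$ of $P$, the intersection $F\cap Q$ is a face of $F$ and of $Q$. The verification has two halves. First, $F\cap Q = F\cap(P\cap Q)$ is the intersection of two faces of $P$, hence is itself a face of $P$; since it is contained in $F$, a face of $P$ contained in another face is a face of that face, so $F\cap Q$ is a face of $F$. Second, writing $F = P\cap H$ where $H$ is a supporting halfspace of $P$, we have $F\cap Q = (P\cap Q)\cap H$, which exhibits $F\cap Q$ as a face of the polyhedron $P\cap Q$ (cut out by a halfspace which still supports $P\cap Q$). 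Since $P\cap Q$ is assumed to be a face of $Q$, and a face of a face is a face, $F\cap Q$ is a face of $Q$.

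With the auxiliary claim in hand, axiom (2) follows by applying it twice. Apply it first with $(P,Q)=(M,N)$ and with the face $F$ of $M$; this gives that $F\cap N$ is a face of $F$ and of $N$. In particular, the pair $(F,N)$ now satisfies the hypothesis of the auxiliary claim. Apply it a second time with $(P,Q)=(N,F)$ and with the face $G$ of $N$, obtaining that $G\cap F$ is a face of $G$ and of $F$. This is exactly what axiom (2) requires.

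I expect the only real subtlety to be the auxiliary claim, and within it the slightly asymmetric argument needed to see $F\cap Q$ as both an intersection of two faces of $P$ (to get it as a face of $F$) and as the trace of a supporting halfspace on $P\cap Q$ (to get it as a face of $Q$). Beyond this, the proof relies only on the standard facts that the face poset of a polyhedron is closed under intersection and transitive, so I anticipate no genuine obstacle.
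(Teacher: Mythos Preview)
Your argument is correct and follows essentially the same route as the paper's: both use a supporting hyperplane to show that cutting down to a face $F$ of $M$ yields $F\cap N$ as a face of $M\cap N$, then invoke transitivity of the face relation and closure of faces under intersection. The paper organizes this as a direct proof that $F\cap G=(F\cap N)\cap(M\cap G)$ is a face of $M\cap N$ (hence of $M$ and $N$, hence of $F$ and $G$), while you package the same step as an auxiliary claim applied twice; note the minor slip that $H$ should be a supporting \emph{hyperplane}, not halfspace.
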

\begin{proof}
Let~$F$, $G$,~$M$, and $N$ be polyhedra in~$\C$ such that~$F$ is a face of~$M$ and $G$ is a face of $N$.
We claim that $F\cap G$ is a face of $M\cap N$.
If $F=M$ and $G=N$, then the assertion is trivial, so without loss of generality,~$F$ is a proper face of~$M$.
Let $H$ be a hyperplane such that $H\cap M$ is the face~$F$ of~$M$.
Then also $H\cap(M\cap N)=F\cap N$ is a face of $M\cap N$.
If $G=N$, then $F\cap G$ is a face of $M\cap N$, and if not, we argue similarly that $M\cap G$ is a face of $M\cap N$.
Thus $F\cap G=(F\cap N)\cap(M\cap G)$ is a face of $M\cap N$, and we have proven the claim in either case.
Now, since~$F$ and $F\cap G$ are faces of $M\cap N$, $F\cap G$ is a face of~$F$.
Symmetrically, $F\cap G$ is a face of $G$.
\end{proof}

\begin{proof}[Proof of Theorem~\ref{shortcut}] 
We will verify the hypotheses of Lemma~\ref{longer shortcut}.
Let~$M$ and~$N$ be distinct polyhedra in~$\M$, let~$F$ be the polyhedron $M\cap N$, and let $d$ be the dimension of~$F$.
If $F=\emptyset$, then we are done.
Otherwise, there exists a point $x\in F$ such that $x$ is not contained in any polyhedron in $\C$ of dimension strictly less than~$d$.
By hypothesis (i), there exists $\ep>0$ such that, for all $\delta$ with $\ep>\delta>0$, the set $(\Supp(\M)\setminus\bigcap_k(\M))\cap\B_\delta(x)$ is path connected.
Every face of~$\C$ not containing~$x$ is some positive distance from~$x$ and there are finitely many faces of~$\C$.
Thus there exists $\delta$ with $\ep>\delta>0$ such that every face of~$\C$ intersecting $\B_\delta(x)$ actually contains~$x$.

Now~$x$ is in~$M$ and in~$N$, so $\B_\delta(x)$ intersects the relative interiors of~$M$ and~$N$.
Let $y\in(\B_\delta(x)\cap\relint(M))\setminus\bigcap_k(\M)$ and $z\in(\B_\delta(x)\cap\relint(N))\setminus\bigcap_k(\M)$.
Let $\alpha:[0,1]\to(\Supp(\M)\setminus\bigcap_k(\M))\cap\B_\delta(x)$ be a path from~$y$ to $z$.
We will use $\alpha$ to construct a sequence $M=M_0,\ldots,M_j=N$ of polyhedra in $\M$ such that, for each $i=1,\ldots,j$, the intersection $M_{i-1}\cap M_i$ is of dimension greater than~$k$.
The set~$M$ is closed, so $\alpha^{-1}(M)$ is a closed subset of $[0,1]$.
If $t_1$ is the maximum of the set $\alpha^{-1}(M)$, then $\alpha(t_1)$ is in~$M$ and in some $M_1\in\M$.
But $\alpha(t_1)\in \Supp(\M)\setminus\bigcap_k(\M)$, so $M\cap M_1$ has dimension greater than~$k$.
If $M_1\neq N$, then repeat the construction to find $M_2\in\M$ such that $\dim(M_1\cap M_2)>k$, and continue until $M_j=N$.

We now show that for any sequence $M_0,\ldots,M_j$ of polyhedra in $\M$ such that $M_{i-1}\cap M_i$ is of dimension greater than~$k$ for each $i=1,\ldots,j$, the intersection $M_0\cap\cdots\cap M_j$ is a face of $M_j$.
We argue by induction on~$j$, the case $j=0$ being trivial.
If $j>0$, then by induction $M_0\cap\cdots\cap M_{j-1}$ is a face~$G$ of $M_{j-1}$.
Now $G'=M_{j-1}\cap M_j$ is a face of $M_{j-1}$ and of $M_j$ by \hyphenation{hypo-thesis} hypothesis (ii).
Thus $M_0\cap\cdots\cap M_j$ is a face of $M_{j-1}$ because it is the intersection of two faces,~$G$ and $G'$, of $M_{j-1}$.
But then $M_0\cap\cdots\cap M_j$ is a face of $G'$, and thus a face of $M_j$.

We have shown that $F'=M_0\cap\cdots\cap M_j$ is a face of~$N$.
Since $\alpha$ is contained in $\B_\delta(x)$, each $M_i$ intersects $\B_\delta(x)$, so by the definition of $\delta$, each $M_i$ contains~$x$.
Thus $F'$ contains~$x$, so by the definition of~$x$, $F'$ is a face of~$\C$ of dimension at least~$d$.
However, $F'$ is contained in the $d$-dimensional polyhedron $F=M\cap N$, so~$F'$ has dimension~$d$.
Let $H$ be a hyperplane such that $N\cap H$ is $F'$.
Since $F'\subseteq F$ and both are~$d$-dimensional polyhedra, we have $F\subseteq H$.
Thus $F\subseteq (N\cap H)=F'$.
Therefore $F'=F$, so that~$F$ is a face of~$N$.
By symmetry,~$F$ is a face of~$M$.
\end{proof}

\section{Path convexity}\label{path sec}
In this section, we show that every hyperplane arrangement has a property that we call path convexity.
This fact will be crucial in the proof of Theorem~\ref{coarsen}.

A \newword{hyperplane arrangement} in $\reals^n$ is a finite collection~$\A$ of affine hyperplanes.
The closures of the connected components of $\reals^n\setminus\left(\bigcup_{H\in\A}H\right)$ are called \newword{regions}.
The regions are the maximal faces of a complete polyhedral complex $\C(\A)$.
Let $\G(\A)$ be the adjacency graph of the complex $\C(\A)$.

Let $Q,R\in\R(\A)$.
A \newword{path} in $\G(\A)$ from~$Q$ to~$R$ is a sequence $R_0,R_1,\ldots,R_k$ of regions with $Q=R_0$ and $R=R_k$, such that $R_{i-1}$---$R_i$ is an edge in $\G(\A)$ for each $i$ from $1$ to $k$.
The length of a path $R_0,R_1,\ldots,R_k$ is~$k$, one less than the number of entries in the sequence.
A \newword{braid move} on a path alters the path by deleting an adjacent subsequence $Q_0,\ldots,Q_m$ from the path and replacing it with a sequence $Q'_0,\ldots,Q'_m$ such that $Q_0=Q_0'$, $Q_m=Q_m'$ and the cycle $Q_0,Q_1,\ldots,Q_mQ'_{m-1},\ldots,Q'_0$ is a polygon in~$\G(\A)$.
A braid move does not change the length of the path.
A \newword{nil move} on paths alters a path by replacing an adjacent subsequence $Q_0,Q_1,Q_2$ such that $Q_0=Q_2$ by the singleton sequence $Q_0$.

We say that a path $\gamma$ is \newword{reduced} if it has minimal length among all paths from~$Q$ to~$R$.
The arrangement~$\A$ is \newword{path convex} if, for every pair $Q,R$ of regions in~$\A$, every path $\gamma$ from~$Q$ to~$R$, and every reduced path~$\rho$ from~$Q$ to~$R$, the path $\gamma$ can be transformed, by a sequence of braid moves and nil moves, to the path~$\rho$.
The appropriateness of the term ``convex'' in this definition will become apparent in the proof of Theorem~\ref{coarsen}, particularly in Lemma~\ref{weak poly convex}.

\begin{theorem}\label{path conv}
Every hyperplane arrangement is path convex.
\end{theorem}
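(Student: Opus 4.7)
The plan is a topological proof. I would realize combinatorial paths in $\G(\A)$ as continuous paths in $\reals^n$ and use simple connectivity to produce a generic homotopy, decomposing it into the prescribed elementary moves. Fix $q \in \int(Q)$ and $r \in \int(R)$. Every combinatorial path $\gamma = R_0, R_1, \ldots, R_k$ from $Q$ to $R$ is realized by a piecewise-linear path $\alpha_\gamma \colon [0,1] \to \reals^n$ from $q$ to $r$ that passes through $\int(R_i)$ in order and crosses each shared wall $R_{i-1} \cap R_i$ transversally at a generic interior point, hence avoiding every codimension-$2$ face of $\C(\A)$; the sequence of regions visited by $\alpha_\gamma$ is then exactly $\gamma$.

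Since $\reals^n$ is contractible, the realizations $\alpha_\gamma$ and $\alpha_\rho$ are homotopic rel endpoints. I would pick a generic piecewise-linear homotopy $H \colon [0,1]^2 \to \reals^n$ between them. Its image is $2$-dimensional, so after a small generic perturbation $H$ meets each codimension-$2$ face of $\C(\A)$ transversally at isolated points in its relative interior. Consequently, the combinatorial path recorded by $H(\,\cdot\,,t)$ is constant in $t$ outside finitely many values, at each of which exactly one elementary event happens. Either $H(\,\cdot\,,t)$ passes transversally through a generic point of a codimension-$2$ face $F$, in which case the local picture in $\Perp(F)$ is precisely the polygon associated to $F$ and the combinatorial path changes by replacing one half of the polygon with the other half---a braid move. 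Or $H(\,\cdot\,,t)$ becomes tangent to a single hyperplane, annihilating or creating two adjacent transversal crossings---a nil move or its reverse.

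This gives an equivalence between $\gamma$ and $\rho$ under braid moves and bidirectional nil moves. To match the one-directional formulation in the definition of path convexity, I would prove the combinatorial lemma that any non-reduced path $\gamma$ can be transformed, by a sequence of braid moves followed by a single nil move, into a strictly shorter path; iterating reduces $\gamma$ to some reduced path $\rho_0$, and a further argument (a refined genericity condition keeping intermediate paths reduced, or a Matsumoto-style combinatorial argument) then relates $\rho_0$ and $\rho$ by braid moves alone. For the lemma itself, if $\gamma$ is non-reduced then some hyperplane $H$ appears at least twice in its crossing sequence; pick two consecutive appearances, observe that the intervening sub-path lies on one side of $H$, and induct on the length of this sub-path (or on $|\A|$, restricting to the half-space arrangement), using braid moves to shrink it until the two $H$-crossings become adjacent and a nil move applies. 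The main obstacle is precisely this lemma: verifying that each braid move needed to bring the two $H$-crossings adjacent corresponds to a polygon in the ambient $\G(\A)$, which requires careful bookkeeping, especially when the sub-path wanders far from $H$ and interacts with codimension-$2$ faces involving multiple hyperplanes.
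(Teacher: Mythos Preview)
Your homotopy argument correctly establishes the \emph{bidirectional} statement: any two paths between the same endpoints are related by braid moves together with nil moves and their reverses. This is a genuinely different route from the paper's purely combinatorial proof, and the generic-PL-homotopy picture is sound (in particular, a generic PL curve through a codimension-$2$ face $F$ is locally linear there, so it enters and exits through antipodal regions of the polygon and the change really is a braid move). But the definition of path convexity is one-directional---only nil moves are allowed, not their reverses---and this is essential for the application in Lemma~\ref{weak poly convex}: there one needs each move to preserve membership of the path in~$\E$, and a reverse nil move inserts an edge $Q$---$Q'$ that need not lie in~$\E$.

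You acknowledge this and propose to close the gap with a shortening lemma plus a separate ``Matsumoto-style'' argument that any two reduced paths are related by braid moves alone. The latter is exactly reduced-path connectedness (Theorem~\ref{red path prop}), and once one has it, path convexity follows directly by the short argument of Lemma~\ref{red path enough}---so the topology becomes superfluous. Your alternative suggestion of a ``refined genericity condition keeping intermediate paths reduced'' cannot work: tangencies in a generic homotopy can appear with either orientation, so reverse nil moves are unavoidable in that framework. As for the shortening lemma itself, your inductive sketch (bring two consecutive $H$-crossings together by braid moves) runs into precisely the obstacle you name: a single braid move replaces one half of a $2k$-gon with the other, so one must first arrange that the $H$-crossing sits at the end of such a half, and organizing this in general requires the same kind of facet-sequence induction that the paper carries out in the proof of Theorem~\ref{red path prop}. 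In short, the topological portion is valid for the weaker symmetric statement but does not circumvent the combinatorial core that the paper isolates as reduced-path connectedness.
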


A slightly weaker statement for oriented matroids is \cite[Proposition~4.4.6]{OrientedMatroids}.
We now prepare to prove Theorem~\ref{path conv}.
Given $Q,R\in\R(\A)$, let $S(Q,R)$ be the set of hyperplanes of~$\A$ that separate~$Q$ from~$R$.
The following lemma is well known.

\begin{lemma}\label{red crit}
A path from~$Q$ to~$R$ is reduced if and only if its length is $|S(Q,R)|$.
\end{lemma}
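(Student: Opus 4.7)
The plan is to show both inequalities by tracking which hyperplanes of $\A$ are crossed along a path. Every edge of $\G(\A)$ corresponds to a codimension-$1$ face of $\C(\A)$, which lies in a unique hyperplane of $\A$; so to an edge $R_{i-1}$---$R_i$ we associate a well-defined hyperplane $H_i \in \A$, and $H_i$ is the unique hyperplane separating $R_{i-1}$ from $R_i$.

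First I would establish the lower bound. Given a path $R_0,\ldots,R_k$ from $Q$ to $R$, with associated hyperplane sequence $H_1,\ldots,H_k$, fix any $H \in \A$ and consider how many indices $i$ have $H_i = H$. Because consecutive regions $R_{i-1}$ and $R_i$ lie on opposite sides of $H_i$ and on the same side of every other hyperplane, the side of $H$ on which $R_i$ lies flips precisely when $H_i = H$. Starting from $Q$ and ending at $R$, the total number of flips must have the same parity as the indicator that $H$ separates $Q$ from $R$. Hence every $H \in S(Q,R)$ occurs at least once in $H_1,\ldots,H_k$, giving $k \ge |S(Q,R)|$.

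Next I would construct a path of length exactly $|S(Q,R)|$, which yields the upper bound and hence the lemma. The idea is to pick points $q \in \relint(Q)$ and $r \in \relint(R)$ and to perturb $q$ and $r$ slightly so that the line segment $[q,r]$ avoids every codimension-$2$ flat $H \cap H'$ (for distinct $H,H' \in \A$); this is possible because each such flat has codimension $\ge 2$ and there are only finitely many of them. Along the perturbed segment, the hyperplanes of $\A$ are crossed one at a time, at isolated parameter values, and each hyperplane $H$ is crossed exactly once if $H \in S(Q,R)$ and not at all otherwise (since entering and leaving the same half-space along a straight line can only happen if $H$ is crossed $0$ or an even number of times, but a line meets a hyperplane in at most one point unless it lies inside it, which we have excluded). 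Listing the regions traversed by the segment between crossings gives a path from $Q$ to $R$ of length $|S(Q,R)|$.

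The main technical point is ensuring that the perturbed segment really does pass through a sequence of well-defined regions meeting along codimension-$1$ faces, rather than, say, touching a hyperplane tangentially or slipping through a lower-dimensional intersection; this is handled by the genericity argument in the previous paragraph. Combining the two bounds, a path from $Q$ to $R$ is reduced precisely when its length equals $|S(Q,R)|$.
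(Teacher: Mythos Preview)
Your proposal is correct and follows essentially the same approach as the paper: both establish the lower bound by noting that each step of a path crosses exactly one hyperplane and that every hyperplane in $S(Q,R)$ must be crossed, and both obtain the upper bound by taking a generic line segment from $\int(Q)$ to $\int(R)$ and reading off the sequence of regions it traverses. Your version is simply more explicit about the parity argument for the lower bound and the genericity needed for the upper bound.
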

\begin{proof}
Moving from one region to an adjacent region, one crosses exactly one hyperplane of~$\A$.
Thus a path from~$Q$ to~$R$ has length at least $|S(Q,R)|$.
If~$x$ is a generic point in the interior of~$Q$ and~$y$ is a generic point in the interior of~$R$, then the line segment $\overline{xy}$ intersects each hyperplane in $S(Q,R)$ exactly once, intersects no two hyperplanes in $S(Q,R)$ in the same point, and intersects no hyperplane of $\A\setminus S(Q,R)$.
Thus $\overline{xy}$ defines a path of length $|S(Q,R)|$ from~$Q$ to~$R$. 
\end{proof}

The arrangement~$\A$ is \newword{reduced-path connected} if, for every pair $Q,R$ of regions in~$\A$ and every pair $\gamma,\rho$ of reduced paths from~$Q$ to~$R$, the path $\gamma$ can be transformed, by a sequence of braid moves, to the path~$\rho$.

\begin{lemma}\label{red path enough}
If~$\A$ is reduced-path connected, then~$\A$ is path convex.
\end{lemma}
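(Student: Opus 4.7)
My plan is to prove this by induction on the length of $\gamma$. If $\gamma$ is already reduced, then $\gamma$ and $\rho$ are both reduced paths from $Q$ to $R$, and reduced-path connectedness transforms one into the other using braid moves alone.

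Suppose instead that $\gamma = R_0, R_1, \ldots, R_k$ is not reduced. My aim is to produce, by a sequence of braid moves followed by a single nil move, a strictly shorter path $\gamma'$ from $Q$ to $R$; the inductive hypothesis applied to $\gamma'$ and $\rho$ then finishes the job. To locate a place where $\gamma$ can be shortened, let $m$ be the largest index such that $R_0, \ldots, R_m$ is reduced, so that $m < k$, and let $H$ be the hyperplane of $\A$ crossed between $R_m$ and $R_{m+1}$. Because $R_0, \ldots, R_m$ is reduced but $R_0, \ldots, R_{m+1}$ is not, Lemma~\ref{red crit} forces $H \in S(R_0, R_m)$ and $|S(R_0, R_{m+1})| = m-1$: in the contrary case $H \notin S(R_0, R_m)$, one would have $|S(R_0, R_{m+1})| = m+1$, making the longer prefix reduced and contradicting the maximality of $m$.

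Consequently any reduced path from $R_0$ to $R_{m+1}$ has length $m-1$, and appending the step across $H$ from $R_{m+1}$ back to $R_m$ produces a reduced path $R'_0, \ldots, R'_m$ from $R_0$ to $R_m$ with $R'_{m-1} = R_{m+1}$ and $R'_m = R_m$. Reduced-path connectedness, applied to the two reduced paths $R_0, \ldots, R_m$ and $R'_0, \ldots, R'_m$, supplies braid moves transforming $\gamma$ into $R'_0, R'_1, \ldots, R'_{m-1}, R'_m, R_{m+1}, R_{m+2}, \ldots, R_k$ while leaving the tail $R_{m+1}, \ldots, R_k$ unchanged. In the resulting path the three consecutive entries $R'_{m-1}, R'_m, R_{m+1}$ satisfy $R'_{m-1} = R_{m+1}$, so a single nil move collapses them to $R'_{m-1}$, producing the desired path $\gamma'$ from $Q$ to $R$ of length $k-2$. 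The main subtlety is the double appeal to Lemma~\ref{red crit} — first to see that $R_{m+1}$ is one hyperplane closer to $R_0$ than $R_m$ is, and then to build a reduced path to $R_m$ ending with a crossing of $H$ — but once this is in hand the rest is a direct assembly of moves.
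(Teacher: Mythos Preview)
Your proof is correct and follows essentially the same approach as the paper's: locate the first place where the prefix ceases to be reduced (your $m$ is the paper's $k-1$), take a reduced path from $R_0$ to $R_{m+1}$ and append the step back to $R_m$ to obtain a second reduced path to $R_m$, invoke reduced-path connectedness on the two reduced prefixes, and then apply a nil move to the resulting back-and-forth across $H$. The only cosmetic difference is that you frame the argument as an induction on the length of $\gamma$, whereas the paper phrases it as ``repeating the process'' until a reduced path is reached and then applying reduced-path connectedness once more.
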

\begin{proof}
Suppose~$\A$ is reduced-path connected.
Let $\gamma=(R_0,R_1,\ldots,R_m)$ be any path from~$Q$ to~$R$ and let~$\rho$ be any reduced path from~$Q$ to~$R$.

If $\gamma$ is not reduced, then Lemma~\ref{red crit} says that $m>|S(P,Q)|$.
Thus there exists a smallest positive integer~$k$ such that $k>|S(R_0,R_k)|$.
Then $R_0,R_1,\ldots,R_{k-1}$ is a reduced path and $|S(R_0,R_k)|=k-2$.
By Lemma~\ref{red crit}, there is a reduced path $R'_0,R'_1,\ldots,R'_{k-2}$ from $R_0$ to $R_k$, and thus the path $R'_0,R'_1,\ldots,R'_{k-2},R_{k-1}$ is reduced. 
(Notice that an \textbf{unprimed} $R_{k-1}$ is the last region in this path.)
Since~$\A$ is reduced-path connected, there is a sequence of braid moves that transforms $R_0,R_1,\ldots,R_{k-1}$ to $R'_0,R'_1,\ldots,R'_{k-2},R_{k-1}$.
The same braid moves transform the path $\gamma$ to $R'_0,R'_1,\ldots,R'_{k-2},R_{k-1},R_k,\ldots,R_m$.
But $R'_{k-2}=R_k$, so a nil move can be applied to $R'_0,R'_1,\ldots,R'_{k-2},R_{k-1},R_k,\ldots,R_m$, replacing $R'_{k-2},R_{k-1},R_k$ with~$R_k$.

Repeating the process, we transform $\gamma$ to a reduced path $\gamma'$ by a sequence of braid moves and nil moves.
By the reduced-path connectedness of~$\A$, $\gamma'$ can be transformed to~$\rho$ by a sequence of braid moves.
\end{proof}

Lemma~\ref{red path enough} reduces Theorem~\ref{path conv} to the following theorem.

\begin{theorem}\label{red path prop}
Every hyperplane arrangement is reduced-path connected.
\end{theorem}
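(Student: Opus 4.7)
The plan is to prove the theorem by induction on $n=|S(Q,R)|$, which by Lemma~\ref{red crit} equals the length of every reduced path from $Q$ to $R$. The base cases $n\le 1$ are immediate, since there is at most one reduced path. For $n\ge 2$, given reduced paths $\gamma=(Q_0,\ldots,Q_n)$ and $\rho=(P_0,\ldots,P_n)$ from $Q$ to $R$, I compare their first edges. If $Q_1=P_1$, I strip the common first edge and apply the inductive hypothesis to the length-$(n-1)$ reduced paths from $Q_1$ to $R$. Otherwise, writing $H'$ for the wall of $Q$ crossed by the first edge of $\rho$, the main task is to transform $\gamma$ by braid moves into a reduced path whose first edge also crosses $H'$, reducing us to the previous case.

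To achieve this transformation, I locate the position $j\ge 2$ at which $\gamma$ crosses $H'$ and argue by a secondary induction on $j$. The key input is that the neighbor $Q'$ of $Q$ across $H'$ satisfies $S(Q',Q_j)=S(Q,Q_j)\setminus\{H'\}$: indeed $Q$ and $Q'$ differ only in the sign on $H'$, and $Q_j$ lies on the opposite side of $H'$ from $Q$. By Lemma~\ref{red crit} there is therefore a length-$(j-1)$ reduced path from $Q'$ to $Q_j$, and prepending the edge $Q\to Q'$ yields a length-$j$ reduced path $\gamma^{*}$ from $Q$ to $Q_j$ beginning with the crossing of $H'$. When $j<n$, the outer inductive hypothesis applies to the two length-$j$ reduced paths $(Q_0,\ldots,Q_j)$ and $\gamma^{*}$, producing braid moves that convert one into the other; splicing this sequence into $\gamma$ drops the $H'$-crossing position from $j$ to $1$, completing the inner induction.

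The main obstacle is the case $j=n$, where the outer induction does not apply to the full-length subpath. I plan to address it by a symmetric maneuver: let $H=H_1$ be the first wall crossed by $\gamma$ and let $k$ be the position of $H$ in $\rho$; if $k<n$, the inner inductive argument applied to $\rho$ with target $H$ brings $H$ to the front of $\rho$, and we reduce to the $Q_1=P_1$ case. Only when both $j=n$ and $k=n$ does a separate polygon argument become necessary: the paths $\gamma$ and $\rho$ have their first and last crossings exchanged, and the codimension-$2$ flat where $H$ and $H'$ meet (near $Q$, near $R$, or further inside the arrangement) supplies a polygon organizing the required braid move. Carrying out this final polygonal step—choosing the correct codimension-$2$ ancestor face and verifying that the two reduced paths indeed fit as two halves of the polygon it determines—is the delicate heart of the argument and parallels Tits' solution to the word problem for Coxeter groups alluded to in the introduction.
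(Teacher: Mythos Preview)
Your induction on $n=|S(Q,R)|$ and the reduction to the case $Q_1\neq P_1$ are sound, and the handling of the case $j<n$ (and by symmetry $k<n$) via the outer induction is correct. The gap is in the residual case $j=k=n$. There your plan is to locate ``the codimension-$2$ flat where $H$ and $H'$ meet'' and use the polygon it determines; but this is not a workable strategy in general. First, in an affine arrangement $H$ and $H'$ may be parallel. Second, even when $H\cap H'$ has codimension~$2$, there is no reason for it to contain a codimension-$2$ \emph{face of $Q$}: the facets $Q\cap H$ and $Q\cap H'$ need not be adjacent in the boundary of $Q$ (think of a region that is a cone over a square, with $H$ and $H'$ supporting opposite sides). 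Without such a face you have no polygon through $Q$ at which to perform a braid move, and the suggestion that $\gamma$ and $\rho$ ``fit as two halves of the polygon it determines'' only holds in the essentially rank-$2$ situation where every hyperplane of $S(Q,R)$ passes through a single codimension-$2$ face. So the ``delicate heart'' you identify is not merely delicate: as stated, it is missing the idea that makes it go through.

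The paper's proof supplies exactly that missing idea, and it is worth comparing. Rather than inducting on the crossing position $j$, the paper runs a secondary induction on the length $m$ of a \emph{gallery of facets} of $Q_0$: a sequence $F_0=Q_0\cap Q_1,\,F_1,\ldots,F_m=Q_0\cap R_1$ of facets of $Q_0$ in which consecutive $F_{i-1},F_i$ meet in a codimension-$2$ face, and each $F_i$ lies in a hyperplane of $S(Q_0,Q_k)$. Existence of such a gallery is the geometric content of the argument; the paper obtains it by radially projecting (from an interior point of $Q_0$) a piecewise-linear curve that traverses $\gamma$ and then $\rho^{-1}$, and checking that the projection avoids codimension-$3$ faces and stays within separating hyperplanes. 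Each step of the $m$-induction then uses an honest polygon at the codimension-$2$ face $F_{m-1}\cap F_m$ of $Q_0$, producing an intermediate reduced path whose first facet is $F_{m-1}$; the outer induction on length handles the comparison once the first steps agree. Your scheme can be repaired along these lines, but the repair essentially reintroduces the facet-gallery device.
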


Theorem~\ref{red path prop} was proved by Deligne \cite[Proposition~1.12]{Deligne} for simplicial hyperplane arrangements, by Salvetti \cite[Lemma~11]{Salvetti}, and also by Cordovil and Moreira \cite[Theorem~2.4]{CorMor} for oriented matroids.
For the sake of completeness, we give a short proof which is similar to the argument given in \cite{CorMor,Salvetti}.

\begin{proof}[Proof of Theorem~\ref{red path prop}]
Let $\gamma=(Q_0,Q_1,\ldots,Q_k)$ and $\rho=(R_0,R_1,\ldots,R_k)$ be reduced paths with $Q_0=R_0$ and $Q_k=R_k$.
We will show that $\gamma$ and~$\rho$ are related by a sequence of braid moves.

Let $F_0,F_1,\ldots,F_m$ be a sequence of facets (maximal proper faces) of $Q_0$, chosen to minimize $m$ subject to the following requirements: (1) that $F_0=Q_0\cap Q_1$, (2) that $F_m=Q_0\cap R_1$, (3) that $F_{i-1}\cap F_i$ has codimension $2$ for each $i=1,\ldots,m$, and (4) that, for each $i=0,1,\ldots,m$, the hyperplane $H_i$ containing $F_i$ is in the set $S(Q_0,Q_k)$.
We will show that such a sequence $F_0,F_1,\ldots,F_m$ exists.
For each $i=1,\ldots, k$, choose a point $x_i\in Q_{i-1}\cap Q_i$ and a point $y_i\in R_{i-1}\cap R_i$ and concatenate the segments $\overline{x_1x_2}$, \ldots, $\overline{x_{k-1}x_k}$, $\overline{x_ky_k}$, and $\overline{y_ky_{k-1}}$, \ldots, $\overline{y_2y_1}$ to construct a continuous curve $\alpha:[0,1]\to\reals^n$ that begins in $Q_0\cap Q_1$, passes through $Q_1,\ldots,Q_k,R_{k-1},\ldots,R_1$, ending in $Q_0\cap R_1$.
Choose a point~$x$ in the relative interior of $Q_0$.
Define a continuous curve $\beta$ in the boundary of $Q_0$ by taking $\beta(t)$ to be the unique point on the boundary of $Q_0$ and on the line segment with endpoints~$x$ and $\alpha(t)$.
Let~$U$ be the union of all lines that contain~$x$ and that intersect a face of $Q_0$ of codimension $3$ or greater.
Since~$U$ has codimension $2$, for generic choices of the $x_i$ and $y_i$, the path $\alpha$ avoids~$U$.
Thus $\beta$ avoids faces of $Q_0$ of codimension $3$ or greater, so $\beta$ defines a sequence $F_0,F_1,\ldots,F_m$ of facets of $Q_0$ satisfying requirements (1), (2), and (3).
To see that the sequence satisfies requirement (4), note that each $H_i$ intersects a line segment connecting a point in the interior of some $Q_j$ or $R_j$, for $1\le j\le k$, to the point $x\in\int(Q_0)$.
Thus $H_i\in S(Q_0,Q_j)$ or $H\in S(Q_0,R_j)$ for some $1\le j\le k$.
But since $\gamma$ and~$\rho$ are reduced paths, we have $S(Q_0,Q_1)\subset S(Q_0,Q_2)\subset\cdots\subset S(Q_0,Q_k)$, and $S(Q_0,R_1)\subset S(Q_0,R_2)\subset\cdots\subset S(Q_0,R_k)$, so $H_i\in S(Q_0,Q_k)$.

We now argue by induction on~$k$ and on $m$.
If $k=0$, then $Q_0=Q_k$ and the assertion is trivial.
Now suppose $k>0$.
If $m=0$, then $F_0=F_m$, so $Q_1=R_1$.
By induction on~$k$, there is a sequence of braid moves relating $Q_1,\ldots,Q_k$ to $R_1,\ldots,R_k$.
This same sequence of braid moves relates $\gamma$ to~$\rho$.

Now suppose $m>0$ as well.
Let $\A'$ be the set of hyperplanes in~$\A$ containing the codimension-$2$ face $F_{m-1}\cap F_m$.
Any region $T$ with $\set{H_{m-1},H_m}\subseteq S(Q_0,T)$ has $\A'\subseteq S(Q_0,T)$.
In particular, $\A'\subseteq S(Q_0,Q_k)$.
Let~$x$ be a point in the relative interior of $Q_0$ and let $p$ be a point in the relative interior of $F_{m-1}\cap F_m$.
For small enough $\ep>0$, the point $p+\ep(p-x)$ is in a region $T$ with $S(Q_0,T)=\A'$.
Let $\mu$ be a reduced path from $T$ to $Q_k$.
Then $\mu$ has length $|S(Q_0,Q_k)|-|\A'|$ because $S(T,Q_k)=S(Q_0,Q_k)\setminus\A'$.
There are two reduced paths from $Q_0$ to $T$, related by a braid move involving the polygon dual to $F_{m-1}\cap F_m$.
Concatenating these paths with $\mu$, we obtain a reduced path $\rho'$ from $Q_0$ to $Q_k$ starting with the regions $Q_0,R_1$, and a reduced path $\gamma'$ from $Q_0$ to $Q_k$ starting with $Q_0$ and then continuing to the region which shares the facet $F_{m-1}$ with $Q_0$.
By induction on $m$, the paths $\gamma$ and $\gamma'$ are related by a sequence of braid moves.
By construction, $\gamma'$ and $\rho'$ are related by a single braid move.
Let $\rho'=R'_0,R'_1,\ldots,R'_k$, so that $R'_0=R_0=Q_0$, $R'_1=R_1$, and $R_k'=R_k=Q_k$.
By induction on~$k$, $R'_1,\ldots,R'_k$ and $R_1,\ldots,R_k$ are related by a sequence of braid moves, so $\rho'$ and~$\rho$ are related by the same sequence of braid moves.
We have found a sequence of braid moves relating $\gamma$ and~$\rho$.
\end{proof}

\section{Edge sets of coarsenings}\label{edge sec}
In this section, we prove Theorem~\ref{coarsen}.
One direction of the theorem is easy.
Indeed, suppose that~$\C'$ is a complex coarsening~$\C$.
If the edge set of~$\C'$ fails the ridge property at some codimension-$2$ face~$F$ of~$\C$, then we reach a contradiction to the supposition that $\C'$ is a complex:
Either some maximal face of~$\C'$ is not convex or there is a pair of maximal faces $C$ and $D$ of~$\C'$, each having~$F$ in their boundary, such that $C\cap D$ is not a face of~$C$.
This contradiction proves the ``only if'' assertion of Theorem~\ref{coarsen}.

Let~$\A$ be a hyperplane arrangement and continue the notation of Section~\ref{path sec}.
We first prove Theorem~\ref{coarsen} in the special case where $\C$ is a pure, full-dimensional, locally strongly connected subcomplex of $\C(\A)$.
Recall that in Section~\ref{intro} we defined the polygon property for a set $\E$ of edges of $\G(\A)$ when $\A$ is a central arrangement and pointed out that the polygon property is equivalent to the ridge property in this case.
We now generalize the polygon property by allowing $\A$ to be non-central and by allowing $\C$ to be a subcomplex of $\C(\A)$.

Let $F$ be a codimension-$2$ face of~$\C(\A)$.
The \newword{polygon} in~$\G(\A)$ associated to~$F$ is the cycle~$\P$ in~$\G(\A)$ consisting of all of the full-dimensional faces of~$\C(\A)$ containing~$F$.
Every polygon in $\G(\A)$ is a $2k$-gon for some $k\ge 2$.
Let $\C$ be a pure, full-dimensional, locally strongly connected subcomplex of $\C(\A)$ with adjacency graph $\G$.
A set~$\E$ of edges of~$\G$ has the \newword{polygon property} if the following condition holds for every $2k$-gon $\P$ in $\G(\A)$:
If~$\E$ contains any $k-1$ consecutive edges of $\P$, then either the opposite $k$ vertices of $\P$ are not vertices of $\G$ or~$\E$ also contains the opposite $k-1$ consecutive edges of~$\P$.
(It is important to keep in mind that the polygon property for $\E$ is defined in terms of polygons in $\G(\A)$, not in $\G$.)
It is immediate that the ridge property is equivalent to the polygon property in the case where $\C$ is a pure, full-dimensional subcomplex of $\C(\A)$.

A set~$\E$ of edges of $\G$ has the \newword{weak polygon property} if the following condition holds for each $2k$-gon~$\P$ in $\G(\A)$:
If~$\E$ contains any~$k$ consecutive edges of~$\P$, then~$\E$ contains all of the edges of~$\P$.
The polygon property implies the weak polygon property.

A \newword{pre-complex}~$\C'$ is a collection of polyhedra such that if $F\in\C'$ and~$G$ is a face of~$F$, then $G\in\C'$.
A pre-complex~$\C'$ \newword{coarsens} a complex~$\C$ if~$\C'$ and~$\C$ have the same support and if each face of~$\C'$ is a union of faces of~$\C$.
(This matches the definition given earlier when $\C'$ was assumed to be a complex.)
We first show that the weak polygon property characterizes pre-complexes coarsening a full-dimensional subcomplex~$\C$ of~$\C(\A)$.

\begin{lemma}\label{weak poly convex}
Let $\C$ be a full-dimensional subcomplex of $\C(\A)$ and let~$\E$ be a set of edges in $\G$ with the weak polygon property.
Then~$\E$ is the edge set of a pre-complex $\C'$ coarsening~$\C$.
The maximal faces of $\C'$ are full-dimensional and have pairwise disjoint interiors.
\end{lemma}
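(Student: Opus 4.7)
The plan is to define an equivalence relation $\sim$ on the full-dimensional faces of $\C$ by declaring $M\sim N$ whenever there is a path $M=M_0,M_1,\ldots,M_k=N$ in $\G$ with every edge $M_{i-1}$---$M_i$ in $\E$. Write the equivalence classes as $X$, set $U_X=\bigcup_{M\in X}M$, and let $\C'$ consist of the sets $U_X$ together with all their faces. By construction $\C'$ is closed under taking faces. The substantive claims are (a) each $U_X$ is a polyhedron, (b) the $U_X$ have pairwise disjoint interiors and are full-dimensional, (c) every face of $\C'$ is a union of faces of $\C$ and $\Supp(\C')=\Supp(\C)$, and (d) the edge set of $\C'$ equals $\E$.

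The convexity of $U_X$ is the heart of the argument. Given $x,y\in U_X$, after a generic perturbation we may take $x\in\relint(M)$ and $y\in\relint(N)$ for some $M,N\in X$ and assume that the segment $\overline{xy}\subseteq\Supp(\C)$ meets only codimension-$0$ and codimension-$1$ faces of $\C(\A)$. The segment then traces a reduced path $R_0,R_1,\ldots,R_\ell$ from $M$ to $N$ in $\G(\A)$ in the sense of Lemma~\ref{red crit}. Since $M\sim N$ there is also a path in $\G$ from $M$ to $N$ whose edges all lie in $\E$. By path convexity (Theorem~\ref{path conv}) this latter path can be transformed into the reduced segment path by a sequence of braid and nil moves. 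Nil moves obviously preserve the property ``every edge of the path lies in $\E$''; and a braid move on a $2k$-gon replaces $k$ consecutive edges by the opposite $k$ consecutive edges, so the weak polygon property is exactly what is needed for braid moves to preserve this property as well. Consequently every $R_i$ lies in $X$, so $\overline{xy}\subseteq U_X$. Since generic points are dense in $U_X$ and $U_X$ is closed, $U_X$ is convex.

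Once convexity is in hand, $U_X$ is a finite union of polyhedra whose facets lie on hyperplanes of $\A$; being closed and convex with boundary on finitely many hyperplanes, $U_X$ is a polyhedron, and each of its faces, lying in an intersection of hyperplanes of $\A$ inside $\Supp(\C)$, is in turn a union of faces of $\C$. For disjointness of interiors, a point in $\int(U_X)\cap\int(U_{X'})$ has a ball contained in both unions; by finiteness of $\C$ the ball meets $\relint(M)$ for some $M\in X$, and the same point of $\relint(M)$ then forces some $M'\in X'$ with $M\cap M'$ full-dimensional, hence $M=M'$ and $X=X'$. Full-dimensionality of $U_X$ is clear since $X$ is nonempty and its members are full-dimensional. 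Finally, for the edge set: $\E$ is contained in it by the definition of $\sim$, and for the reverse containment, if $M,N\in X$ are adjacent in $\G$ then the reduced path from $M$ to $N$ has length one, and running the braid/nil move reduction from an $\E$-path witnessing $M\sim N$ to this reduced path shows that $M$---$N\in\E$.

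The main obstacle is step (a), the convexity of $U_X$; the key insight is the exact match between the weak polygon property and the condition needed for braid moves to respect $\E$, so that path convexity can be leveraged to turn an arbitrary $\E$-path into the reduced path cut out by a straight-line segment.
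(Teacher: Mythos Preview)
Your proposal is correct and follows essentially the same approach as the paper's proof: define the maximal faces of $\C'$ as unions over $\E$-connected components, prove convexity of each union by using path convexity (Theorem~\ref{path conv}) to rewrite an $\E$-path into the reduced path determined by a generic line segment, observing that nil moves trivially and braid moves---via the weak polygon property---preserve the condition ``all edges in $\E$''; then handle non-generic $x,y$ by a limiting argument, and finally verify that the edge set of $\C'$ is exactly $\E$ by applying the same rewriting to the length-one reduced path between adjacent regions. Your treatment of disjoint interiors is slightly more explicit than the paper's (which simply says ``by construction''), but the substance is identical.
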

\begin{proof}
We will think of~$\E$ not only as a set of edges of $\G$, but as a graph in its own right, whose vertices are the full-dimensional faces of~$\C$.
Each connected component of~$\E$ is in particular a set of regions.
Consider the union of this set of regions.
We must show that each such union is a polyhedron.
In fact, we need only show convexity; the fact that the union is a polyhedron will follow.

Let~$M$ be a such a union.
Choose points $x,y\in M$.  
We will show that the line segment $\overline{xy}$ is contained in~$M$.
If there is a region~$R$ of~$\A$ with $\set{x,y}\subseteq R\subseteq M$, then $\overline{xy}\subseteq R\subseteq M$, so suppose~$x$ and~$y$ are not contained in the same region.
Let~$Q$ be a region with $x\in Q\subseteq M$ and let~$R$ be a region with $y\in R\subseteq M$.
Suppose for the moment that $x\in\int(Q)$ and $y\in\int(R)$ and that the line segment $\overline{xy}$ does not intersect any face of $\C(\A)$ of codimension greater than~$1$.
Then, as in the proof of Lemma~\ref{red crit}, $\overline{xy}$ defines a path~$\rho$ of length $|S(Q,R)|$ from~$Q$ to~$R$. 
By Lemma~\ref{red crit},~$\rho$ is a reduced path.
On the other hand, since~$Q$ and~$R$ are both contained in~$M$, there is a path $\gamma=(R_0,R_1,\ldots,R_k)$ from~$Q$ to~$R$ which is not only a path in $\G(\A)$, but also a path in~$\E$.
By the path convexity of~$\A$, the path $\gamma$ can be converted to the path~$\rho$ by a sequence of braid moves and nil moves.
Trivially, each nil move applied to $\gamma$ produces a new path in~$\E$.
Furthermore, the weak polygon property of~$\E$ implies that, when a braid move is performed on $\gamma$, the new path is also a path in~$\E$.
We conclude that~$\rho$ is a path in~$\E$.
In particular, each region in~$\rho$ is contained in~$M$, so $\overline{xy}\subseteq M$.

If $x\not\in\int(Q)$, if $y\not\in\int(R)$ and/or if $\overline{xy}$ intersects a face of $\C(\A)$ of codimension greater than~$1$, then there exist points $x'\in\int(Q)$ and $y'\in\int(R)$, arbitrarily close to~$x$ and~$y$ respectively, such that $\overline{x'y'}$ does not intersect any face of $\C(\A)$ of codimension greater than~$1$.
Therefore each point on $\overline{xy}$ is arbitrarily close to a point which we have proven to be in~$M$.
Since~$M$ is a union of finitely many closed polyhedra, it is closed, so $\overline{xy}\subseteq M$.

The collection of all such polyhedra~$M$ and their nonempty faces is a pre-complex~$\C'$ coarsening $\C(\A)$.
By construction, the maximal faces of $\C'$ are full-dimensional and have pairwise disjoint interiors.
It remains to show that~$\E$ is indeed the edge set of~$\C'$.
By construction,~$\E$ is contained in the edge set of~$\C'$.
Suppose $\set{Q,R}$ is a pair of regions contained in the same maximal face of~$\C'$.
Then the path $Q,R$ is a reduced path~$\rho$ (in $\G(\A)$) from~$Q$ to~$R$.
Since~$Q$ and~$R$ are contained in the same maximal face of~$\C'$, there exists a path $\gamma$ in~$\E$ from~$Q$ to~$R$.
Arguing as above, the path~$\rho$ is also a path in~$\E$, or in other words, $Q$---$R$ is an edge in~$\E$.
We have shown that~$\E$ is the edge set of~$\C'$.
\end{proof}

\begin{lemma}\label{codim 1 intersections}
Let $\C$ be a full-dimensional subcomplex of $\C(\A)$.
Let~$\E$ be a set of edges in $\G$ with the polygon property, so that~$\E$ is the edge set of a pre-complex~$\C'$ coarsening~$\C$, by Lemma~\ref{weak poly convex}.
If~$M$ and~$N$ are maximal faces of~$\C'$ and $M\cap N$ has codimension~$1$, then $M\cap N$ is a face of~$M$ and a face of~$N$.
\end{lemma}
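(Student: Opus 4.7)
The plan is to identify $M\cap N$ with a particular facet $F_M$ of $M$ (and, by symmetry, a facet $F_N$ of $N$).

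First I would locate a separating hyperplane. Choose a point $x\in\relint(M\cap N)$ generic enough to avoid every codimension-$2$ face of~$\C$; then $x$ lies in the relative interior of some codimension-$1$ face $F$ of~$\C$, contained in a hyperplane $H\in\A$. The two regions $Q,R$ of~$\C$ having $F$ as a common facet lie on opposite sides of $H$; since $F\subseteq M\cap N$ and $M,N$ have disjoint interiors, one of $Q,R$ lies in $M$ and the other in $N$, say $Q\subseteq M$ and $R\subseteq N$. Convexity of $M$ then forces all of $M$ into the closed half-space of $H$ containing $Q$, call it $H^+$, and analogously $N\subseteq H^-$. Consequently $M\cap N\subseteq H$, so $F_M:=M\cap H$ is a facet of $M$ containing $M\cap N$, and similarly $F_N:=N\cap H$ is a facet of $N$ containing $M\cap N$.

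The crux is to prove $F_M\subseteq N$: combined with $M\cap N\subseteq F_M$, this gives $M\cap N=F_M$, a face of $M$. For this I would fix an arbitrary codimension-$1$ face $F'$ of~$\C$ inside $F_M$ and traverse $F_M$ by a sequence $F=F_0,F_1,\ldots,F_k=F'$ of codimension-$1$ faces of~$\C$ in $F_M$ with $G_i:=F_{i-1}\cap F_i$ a codimension-$2$ face of~$\C$. Since $F$ lies in $\int\Supp(\C)$ (both sides of $H$ are in $\Supp(\C)$ near $F$) and $F_M$ is convex, the set of points of $F_M$ in $\int\Supp(\C)$ is star-shaped from $\relint(F)$ and hence connected, so the walk can be chosen with each $G_i$ in $\int\Supp(\C)$. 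I then induct on~$i$ with the invariant that the $H^-$-side region $R_i$ of~$\C$ adjacent to $F_i$ lies in $N$. At step~$i$ the polygon $\P_i$ at $G_i$ is a $2k$-gon (for some $k\ge 2$) whose line $H\cap\Perp(G_i)$ cuts off $k$ regions on each side, with $Q_{i-1},Q_i$ the extreme $H^+$-side regions. Convexity of $M$, applied to its tangent cone at a relative-interior point of $G_i$, implies that the regions of $M$ appearing in $\P_i$ form a contiguous arc; this arc contains both extremes $Q_{i-1},Q_i$ yet cannot cross $H$, so it is exactly the full $H^+$-side, all $k$ regions. Its $k-1$ internal edges therefore all lie in~$\E$, and Lemma~\ref{poly prop arr} transfers this to the $k-1$ opposite edges on the $H^-$ side, placing all $k$ of those regions in a single face of~$\C'$. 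Since $R_{i-1}\in N$ by induction, $R_i\in N$ as well. The walk then yields $F'\subseteq R_k\subseteq N$, and varying $F'$ gives $F_M\subseteq N$ first on the dense subset in $\int\Supp(\C)$ and then on all of $F_M$ by closedness of $N$.

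The main obstacle I anticipate is identifying the $M$-arc in $\P_i$ and checking it spans exactly the $H^+$ side: this uses both the convexity of the tangent cone and the fact from the first step that $M$ lies entirely on one side of $H$. A secondary subtlety is arranging the walk to avoid $\partial\Supp(\C)$ so that the polygon property is available at every $G_i$; the star-shaped argument handles this on an open dense subset of $F_M$, and the conclusion is then extended to all of $F_M$ by closedness of $N$.
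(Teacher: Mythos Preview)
Your proof is correct and follows essentially the same route as the paper's: identify a facet $F$ of $M$ containing $M\cap N$, then walk across $F$ through adjacent codimension-$1$ faces of $\C(\A)$ sharing codimension-$2$ faces, applying the polygon property at each step to drag $N$ along on the $H^-$ side. The paper phrases the inductive invariant as ``$G_i\subseteq M\cap N$'' rather than ``$R_i\subseteq N$'', but these are equivalent; the only notable difference is that you are explicit about keeping the walk inside $\int\Supp(\C)$, a point the paper passes over (and which in fact resolves itself automatically, since once $R_1,\ldots,R_k\subseteq M$ and $R_{2k}\subseteq N$, convexity of $\Supp(\C)$ forces all $2k$ regions of the polygon into $\Supp(\C)$).
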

\begin{proof}
Suppose~$M$ and~$N$ are maximal faces of~$\C'$ and $M\cap N$ has codimension~$1$.
Since~$M$ and~$N$ are unions of faces of $\C(\A)$, their intersection is a union of codimension-$1$ faces of $\C(\A)$.
Since~$M$ and~$N$ are convex and have disjoint interiors, $M\cap N$ is contained in some face~$F$ of~$M$ of codimension~$1$.
The face~$F$ is also a union of codimension-$1$ faces of $\C(\A)$.
We now prove the following claim:
If~$G$ is a codimension-$1$ face of $\C(\A)$ contained in $M\cap N$ and $G'$ is a codimension-$1$ face of $\C(\A)$ contained in~$F$ such that $G\cap G'$ has codimension~$2$, then $G'\subseteq (M\cap N)$.

To prove the claim, let~$\P$ be the polygon consisting of regions containing the codimension-$2$ face $G\cap G'$ of $\C(\A)$.
Since~$G$ and $G'$ are both in the face~$F$ of~$M$, the polygon~$\P$ contains a region~$R$ with $G\subset R\subset M$ and a region $R'$ with $G'\subset R'\subset M$.
Since~$G$ and $G'$ are both contained in the hyperplane defining~$F$ as a codimension-$1$ face of~$M$, we can name the regions of~$\P$ as the cycle $R_0,R_1,\ldots,R_{2k}=R_0$ with $R_1=R$ and $R_k=R'$.
Since $R_1$ and $R_k$ are both in~$M$, and since~$M$ is convex, we conclude that the regions $R_1,\ldots,R_k$ are all in~$M$.
Now the polygon property implies that the regions $R_{k+1},\ldots,R_{2k}$ are all in the same maximal face of~$\C'$.
But $R_{2k}\subset N$ because $G\subset N$.
Thus in particular $R_{k+1}\subset N$, so that $G'\subset N$.
By hypothesis, $G'\subset F\subset M$, so $G'\subset M\cap N$.
We have established the claim.

Given any two codimension-$1$ faces~$G$ and $G'$ of $\C(\A)$ contained in~$F$, choosing generic points $x\in G$ and $x'\in G'$, the line segment $\overline{xx'}$ defines a sequence $G=G_0,G_1,\ldots,G_k=G'$ of codimension-$1$ faces of $\C(\A)$, contained in~$F$, such that $G_{i-1}$ and $G_i$ share a codimension-$2$ face for each $i=1,\ldots,k$.
Since there exists a codimension-$1$ face of $\C(\A)$ in $M\cap N$, the claim and a simple induction on~$k$ establish that $F\subseteq M\cap N$.
Thus $F=M\cap N$, so that $M\cap N$ is a face of~$M$.
By symmetry, $M\cap N$ is a face of~$N$ as well.
\end{proof}

Suppose~$\C$ is a pure, full-dimensional, locally strongly connected subcomplex of $\C(\A)$, and suppose $\E$ is a set of edges of~$\G$ having the polygon property.
Lemma~\ref{weak poly convex} says that $\E$ is the edge set of a pure, full-dimensional pre-complex $\C'$ coarsening $\C$ and that the set $\M$ of maximal faces of $\C'$ consists of $n$-dimensional polytopes with pairwise disjoint interiors.
Lemma~\ref{codim 1 intersections} says that $M\cap N$ is a face of~$M$ and of~$N$ for all pairs $M,N\in\M$ such that $\dim(M\cap N)>n-2$.
We now show that $\M$ satisfies condition (i) of Theorem~\ref{shortcut} with $k=n-2$.
Let $x$ be a point in $\Supp(\M)=\Supp(\C)$ and let $F$ be the face of $\C$ such that $x\in\relint(F)$.
Since $\C$ is locally strongly connected, by considering the local adjacency graph of $\C$ at $F$, we see that there exists $\ep>0$ such that $\Supp(\M)\cap\B_\delta(x)$ has a connected interior for all $\delta$ with $\ep>\delta>0$.
Then since $\bigcap_{n-2}(\M)$ is at most $(n-2)$-dimensional, the set $\left[\Supp(\M)\setminus\bigcap_{n-2}(\M)\right]\cap\B_\delta(x)$ is path connected.
Theorem~\ref{shortcut} completes the proof of Theorem~\ref{coarsen} for this special choice of $\C$.

Now let~$\C$ be any pure, full-dimensional, locally strongly connected polyhedral complex and let~$\A$ be the arrangement of hyperplanes consisting of hyperplanes containing codimension-$1$ faces of~$\C$.
Then every polyhedron in~$\C$ is a union of faces of $\C(\A)$.
Let~$\oC$ be the subcomplex of $\C(\A)$ consisting of faces contained in $\Supp(\C)$.
Then~$\C$ coarsens~$\oC$.
It is easily verified that $\oC$ is locally strongly connected.
(Local adjacency graphs of $\C$ are obtained from local adjacency graphs of $\oC$ by contracting edges.)

Continue the notation~$\G$ for the adjacency graph on maximal faces of~$\C$, and let~$\oG$ be the adjacency graph on full-dimensional faces of $\oC$.
Let~$\E$ be a set of edges of $\G$ with the ridge property.
We now define a set~$\oE$ of edges of~$\oG$, describing~$\oE$ as a set of codimension-$1$ faces of~$\oC$.
Each such face represents the pair of regions containing it.
Let~$\oF$ be a codimension-$1$ face of~$\oC$, not contained in the boundary of $\Supp(\C)$.
If~$\oF$ is contained in a codimension-$1$ face $F$ of~$\C$, then choose~$\oF$ to be an edge in~$\oE$ if and only if $F$ is an edge in~$\E$.
Otherwise, since~$\C$ coarsens~$\oC$,~$\oF$ is contained in a full-dimensional face of~$\C$ and intersects the interior of that full-dimensional face.
In this case, choose~$\oF$ to be an edge in~$\oE$.

Let $\oF$ be a codimension-$2$ face of~$\oC$.
If~$\oF$ is contained in a codimension-$2$ face $F$ of~$\C$, then $\Aff(\oF)$ and $\Aff(F)$ coincide.
Since~$\E$ has the ridge property, it restricts to the edge set of a fan in $\Perp(F)$ which coarsens $\C|_{F}$.
But~$\oE$ defines the same fan in $\Perp(\oF)=\Perp(F)$, so~$\oE$ has the ridge property at~$\oF$.
If~$\oF$ is contained in a codimension-$1$ face $F$ of~$\C$ and intersects the relative interior of $F$, then there are two cases, depending on whether $F$ is in the interior of $\Supp(\C)$ or on the boundary of $\Supp(\C)$.
In either case, it is immediate that the restriction of $\oE$ is the edge set of a fan coarsening $\oC|_{\oF}$.
In one case, the fan has two maximal cones, which are halfplanes, and in the other case, the fan has a unique maximal cone, which is a halfplane.
Since~$\C$ coarsens~$\oC$, the only possibility remaining is that~$\oF$ is contained in, and intersects the interior of, a full-dimensional face $F$ of~$\C$.
In this case, the restriction of $\oE$ defines the fan having the whole plane as its unique nonempty face.
In all cases,~$\oE$ has the ridge property at~$\oF$.
Since~$\oF$ was chosen arbitrarily,~$\oE$ has the ridge property.

By the special case of Theorem~\ref{coarsen} already proved,~$\oE$ is the edge set of a complex~$\C'$ coarsening~$\oC$.
For each maximal face~$M$ of~$\C$, every pair of adjacent regions of~$\A$ contained in~$M$ is an edge in~$\oE$, so~$\C'$ coarsens~$\C$ as well.
We will complete the proof by showing that~$\E$ is the edge set of~$\C'$, as a coarsening of~$\C$.
Let~$M$ and~$N$ be adjacent maximal faces of~$\C$.
If~$M$---$N$ is an edge in~$\E$, then $M\cap N$ is a union of codimension-$1$ faces of~$\oC$, each of which defines an edge in~$\oE$, so~$M$---$N$ is in the edge set of~$\C'$ as a coarsening of~$\C$.
If not, then $M\cap N$ is a union of codimension-$1$ faces of~$\oC$ none of which defines an edge in~$\oE$, so~$M$---$N$ is not in the edge set of~$\C'$ as a coarsening of~$\C$.
We have proved the general case of Theorem~\ref{coarsen}.

We conclude the section by adapting the above arguments to prove a special case of Tietze's convexity theorem.
This will be generalized to oriented matroids as Theorem~\ref{Tietze OM}.
\begin{theorem}\label{Tietze lite}
Let $\M$ be a finite set of $n$-dimensional polyhedra in $\reals^n$.
Suppose:
\begin{enumerate}
\item[(i) ] The interior of $\Supp(\M)$ is path-connected; and
\item[(ii) ] For every~$x$ in the boundary of $\Supp(\M)$, there exists a closed halfspace $H^+$ bounded by a hyperplane $H$ such that $x\in H$ and $\bigcup_{x\in M\in\M}M\subseteq H^+$.
\end{enumerate}
Then~$\Supp(\M)$ is convex.
\end{theorem}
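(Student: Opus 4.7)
The plan is to reduce Theorem~\ref{Tietze lite} to Lemma~\ref{weak poly convex}. Let $\A$ be the hyperplane arrangement consisting of every hyperplane that bounds a facet of some polyhedron in~$\M$. Each $M \in \M$ is then a union of closed regions of $\C(\A)$, and so is $\Supp(\M)$. Define $\E$ to be the set of edges $Q$---$R$ of $\G(\A)$ for which both $Q$ and~$R$ are contained in $\Supp(\M)$. Once $\E$ is shown to have the weak polygon property, Lemma~\ref{weak poly convex}, applied with $\C = \C(\A)$ (whose support~$\reals^n$ is convex), produces a pre-complex $\C'$ coarsening $\C(\A)$ whose maximal faces, the $\E$-components, are convex.

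The heart of the argument, and the step I expect to be the main obstacle, is verifying the weak polygon property at each codimension-$2$ face $F$ of $\C(\A)$. Let~$\P$ be the polygon at~$F$, with $2k$ regions $R_1, \ldots, R_{2k}$, and suppose that $R_1, \ldots, R_{k+1}$ are all contained in $\Supp(\M)$. Pick~$p$ in the relative interior of~$F$; since $p$ lies in the closure of every $R_i$, we have $p \in \Supp(\M)$. If $p \in \int(\Supp(\M))$, then every region of~$\P$ meets a small neighborhood of~$p$ and is therefore contained in $\Supp(\M)$, giving the conclusion. Otherwise, hypothesis~(ii) produces a closed halfspace $H^+$ bounded by a hyperplane $H \ni p$ containing every $M \in \M$ with $p \in M$, so that $R_1, \ldots, R_{k+1} \subseteq H^+$. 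The tangent cone to each such $R_i$ at~$p$ has lineality space containing the linear subspace $\Aff(F) - p$; because this subspace is trapped inside the halfspace $H^+ - p$, it must lie in~$H$, forcing $\Aff(F) \subseteq H$. Consequently $\ell := H \cap (p + \Perp(F))$ is a line through~$p$, and in the two-dimensional cross-section by $p + \Perp(F)$, the polygon~$\P$ is realized as the central arrangement of $k$ lines cutting the plane into $2k$ sectors. A closed halfplane bounded by~$\ell$ can contain at most~$k$ of these sectors, contradicting that $k + 1$ consecutive sectors lie in $H^+$.

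With the weak polygon property in hand, Lemma~\ref{weak poly convex} yields the pre-complex $\C'$, and it remains to use hypothesis~(i) to show that all regions of~$\A$ contained in $\Supp(\M)$ belong to a single $\E$-component. Given two such regions $Q$ and~$R$, pick $x \in \int(Q)$ and $y \in \int(R)$; both lie in $\int(\Supp(\M))$, which is path-connected by~(i), so there is a continuous path $\alpha$ from~$x$ to~$y$ in $\int(\Supp(\M))$. After a generic perturbation, $\alpha$ avoids all faces of $\C(\A)$ of codimension at least~$2$ and crosses each hyperplane of~$\A$ transversely, producing a sequence of adjacent regions $Q = R_0, R_1, \ldots, R_m = R$. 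Each~$R_i$ contains an interior point of $\Supp(\M)$ in its interior, so $R_i$ is contained in some $M \in \M$, and the sequence is a path in~$\E$. Hence $\Supp(\M)$ is a single $\E$-component, which Lemma~\ref{weak poly convex} declares convex.
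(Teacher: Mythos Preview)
Your proof is correct and follows the same approach as the paper's: pass to the refining arrangement $\A$, take $\E$ to be the set of adjacency edges between regions lying in $\Supp(\M)$, verify the weak polygon property via hypothesis~(ii), apply Lemma~\ref{weak poly convex} with $\C=\C(\A)$, and use hypothesis~(i) to see that all such regions form a single $\E$-component. You spell out in detail the reason a boundary point in the relative interior of a codimension-$2$ face would violate~(ii)---by showing $\Aff(F)\subseteq H$ and reducing to a two-dimensional count of sectors---whereas the paper simply asserts that such a point ``provides a violation of hypothesis~(ii)''; otherwise the two arguments are essentially identical.
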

\begin{proof}
Let~$\A$ be the set of hyperplanes that contain codimension-$1$ faces of polyhedra in~$\M$.
Let~$\oM$ be the set of regions of~$\A$ contained in $\Supp(\M)$.
Let~$\oE$ be the set of edges $Q$---$R$ in $\G(\A)$ such that $Q$ and $R$ are both in~$\oM$.
Let~$U$ be the union of all faces of codimension at least $2$ of polyhedra of~$\oM$.
Since the interior of $\Supp(\M)=\Supp(\oM)$ is path-connected and full-dimensional, for any two polyhedra $M$ and $N$ in~$\oM$, there exists a continuous path in $\int(\Supp(\oM))\setminus U$ from $\int(M)$ to $\int(N)$.
This implies that $\oM$ is a connected component of~$\oE$.

Let $\P$ be a $2k$-gon in $\G(\A)$ defined by a codimension-$2$ face~$F$ of $\C(\A)$.
Suppose that $k$ consecutive edges of $\P$ are in~$\oE$ but that not all of the edges of $\P$ are in~$\oE$.
Then any point~$x$ in the relative interior of~$F$ provides a violation of hypothesis~(ii).
Thus~$\oE$ has the weak polygon property, so~$\oE$ defines a pre-complex $\C'$ by Lemma~\ref{weak poly convex} (with $\C=\C(\A)$).
Since $\oM$ is a connected component of~$\oE$, one of the full-dimensional polyhedra in $\C'$ is $\Supp(\oM)=\Supp(\M)$.
\end{proof}

\section{Polyhedral complexes in oriented matroids}\label{OM sec}
In this section, we extend our results to the context of oriented matroids.
We base our approach to oriented matroids on \cite[Chapter~4]{OrientedMatroids}.

Let $\L\subseteq\set{+,-,0}^E$ be the set of covectors of an oriented matroid with no loops, over a finite ground set $E$.
The symbol~$\L$ will also denote the corresponding (``big'') face lattice.
A \newword{closed halfspace} in~$\L$ is a set $H^+_e$ consisting of all covectors in~$\L$ having component $+$ or $0$ in position $e$, or a set $H^-_e$ consisting of all covectors having component $-$ or $0$ in position $e$.
(In \cite[Chapter~4.2]{OrientedMatroids}, a different notion of halfspaces appears, but translating between the two conventions is easy.)
A \newword{hyperplane} in~$\L$ is a set $H_e=H_e^+\cap H_e^-$.
A \newword{polyhedron} $P$ in~$\L$ is an intersection of closed halfspaces.
The \newword{rank} of a polyhedron $P$ is the rank of the maximal covectors in $P$.
An \newword{exposed face} of $P$ is a subset of $P$ of the form $P\cap H_e$ for any $e\in E$ with $P\subseteq H^+_e$ or $P\subseteq H^-_e$.
A \newword{face} of $P$ is any intersection of exposed faces, including the empty intersection, which is interpreted as $P$.
A set $U$ of covectors is \newword{closed} if it is an order ideal in $\L$.
The \newword{boundary} of a closed set~$U$ of covectors is the set of covectors~$X$ in~$U$ such that the principal order filter of~$\L$ generated by~$X$ is not contained in~$U$.
The \newword{interior} of~$U$ is~$U$ minus the boundary of~$U$.
A set of covectors is \newword{connected} if it induces a connected subgraph of the Hasse diagram of~$\L$.

The definition of a \newword{polyhedral complex} in~$\L$ and terminology for complexes is copied verbatim from Section~\ref{intro}, with $\reals^n$ replaced by~$\L$.

\begin{theorem}\label{shortcut OM}
Fix $k\ge -1$ and let~$\M$ be a finite collection of polyhedra in an oriented matroid~$\L$, each of rank greater than~$k$.
Suppose:
\begin{enumerate}
\item[(i) ] For each $X\in\Supp(\M)$, the set $\set{Y\in\Supp(\M)\setminus\bigcap_k(\M):Y\ge X}$ is connected.
(Here $\bigcap_k(\M)$ is defined as in Section~\ref{intro}.)
\item[(ii) ] $M\cap N$ is a face of~$M$ and of~$N$ for all $M,N\in\M$ with $\rank(M\cap N)>k$.
\end{enumerate}
Then the collection of all polyhedra in~$\M$ and their faces is a polyhedral complex.
\end{theorem}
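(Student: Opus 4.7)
The plan is to mimic the proof of Theorem~\ref{shortcut}, replacing the Euclidean ball $\B_\delta(x)$ by the upper set $\set{W\in\L:W\ge X}$ in the face lattice of~$\L$.  First, I would establish the OM analog of Lemma~\ref{longer shortcut}: if $\M'$ is a finite collection of polyhedra in~$\L$ with $M\cap N$ a face of both $M$ and~$N$ for every $M,N\in\M'$, then the collection of all polyhedra in~$\M'$ and their faces is a polyhedral complex.  The proof of Lemma~\ref{longer shortcut} transcribes essentially verbatim, since slicing $M\cap N$ by a hyperplane $H_e$ defining an accessible face of~$M$ is well-defined in the OM setting, and the notion of a face as an intersection of accessible faces provides the required closure under intersections.

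It then suffices to verify that any two distinct $M,N\in\M$ intersect in a face of both.  Fix $F=M\cap N$.  If $\rank F>k$, hypothesis~(ii) applies directly.  Otherwise, pick a maximal covector $X$ of~$F$ and extend it to a maximal covector $Y$ of~$M$ and a maximal covector $Z$ of~$N$ with $Y,Z\ge X$.  Since $\rank Y=\rank M>k$ and $\rank Z=\rank N>k$, neither $Y$ nor $Z$ lies in $\bigcap_k(\M)$.  Hypothesis~(i) then supplies a Hasse-diagram path $Y=W_0,W_1,\ldots,W_\ell=Z$ through $\set{W\ge X:W\in\Supp(\M)\setminus\bigcap_k(\M)}$.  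Choose for each $W_i$ some $M_i\in\M$ containing it, with $M_0=M$ and $M_\ell=N$.  Using that polyhedra in~$\L$ are downward-closed under the covector order, the smaller of each consecutive pair $W_{i-1},W_i$ lies in $M_{i-1}\cap M_i$ and has rank exceeding~$k$ (it lies outside $\bigcap_k(\M)$), so hypothesis~(ii) makes each consecutive intersection a face of both neighbors.  The induction from the proof of Theorem~\ref{shortcut} then gives that $F':=M_0\cap\cdots\cap M_\ell$ is a face of~$N$ satisfying $F'\subseteq F$ and $X\in F'$.

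To finish, I observe that since $X$ is maximal in~$F$ and $X\in F'\subseteq F$, the covector $X$ is also maximal in~$F'$, so $\rank F'=\rank X=\rank F$.  Standard OM facts then imply that $F$ and $F'$ have the same OM-affine hull, so the collection of hyperplanes $H_e$ containing~$F'$ coincides with the collection containing~$F$.  Writing $F'$ as an intersection of accessible faces of the form $N\cap H_{e_i}$, each such $H_{e_i}$ therefore contains~$F$ as well, yielding $F\subseteq N\cap H_{e_1}\cap\cdots=F'$ and hence $F=F'$.  Thus~$F$ is a face of~$N$, and by symmetry a face of~$M$; applying the OM analog of Lemma~\ref{longer shortcut} then completes the proof.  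The step most likely to require care is the OM affine-hull matching in the last paragraph: the Euclidean analog follows immediately from matching dimensions, but in the OM setting it requires the lattice-theoretic fact that rank determines the set of ambient hyperplanes containing a polyhedron of~$\L$, and this has to be pinned down carefully in the conventions introduced at the start of Section~\ref{OM sec}.
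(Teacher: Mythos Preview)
Your proposal is correct and follows essentially the same route as the paper's own proof: reduce to the OM analogue of Lemma~\ref{longer shortcut}, use hypothesis~(i) to thread a Hasse-diagram path above a maximal covector $X$ of $M\cap N$, turn this into a chain $M=M_0,\ldots,M_\ell=N$ with high-rank consecutive intersections, and then run the same induction as in Theorem~\ref{shortcut} to get a face $F'\subseteq F$ of $N$ with $\rank F'=\rank F$, from which $F=F'$.

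Two small remarks. First, the paper is slightly more explicit than you are about the OM version of Lemma~\ref{longer shortcut}: because an OM ``face'' is an \emph{intersection} of accessible faces rather than a single hyperplane slice, one must argue the claim $F\cap N$ is a face of $M\cap N$ one accessible face at a time and then intersect; your parenthetical about ``the notion of a face as an intersection of accessible faces'' is exactly this, but it is worth spelling out. Second, your sentence ``has rank exceeding~$k$ (it lies outside $\bigcap_k(\M)$)'' is phrased as a statement about the covector $W_i$, whereas what you actually need (and what your argument gives) is $\rank(M_{i-1}\cap M_i)>k$; the inference is that $W_i\in M_{i-1}\cap M_i$ and $W_i\notin\bigcap_k(\M)$ forces the intersection itself to have rank $>k$. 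Finally, the ``affine-hull matching'' step you flag is handled in the paper exactly as you sketch: for any accessible face $N\cap H_e\supseteq F'$, one shows $F\subseteq H_e$ from $F'\subseteq F$ and $\rank F'=\rank F$ (the underlying fact is that if $X$ is maximal in $F$ and $X_e=0$, then composition $X\circ Y$ for any $Y\in F$ forces $Y_e\in\{0\}$ once $F\subseteq H_e^+$). So your caution there is well placed but easily discharged.
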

\begin{proof}
The oriented matroid version of Lemma~\ref{longer shortcut} holds by a proof that is the same, except for a modification of the argument that $F\cap N$ is a face of $M\cap N$.
For each exposed face $F'$ of $M$ containing $F$, we argue as in the proof of Lemma~\ref{longer shortcut} that $F'\cap N$ is an exposed face of $M\cap N$.
Since $F$ is the intersection of all exposed faces of $M$ containing it, we conclude that $F\cap N$ is an intersection of exposed faces of $M\cap N$, or in other words that $F\cap N$ is a face of $M\cap N$.

Let~$M$ and~$N$ be distinct polyhedra in~$\M$, let~$F$ be the polyhedron $M\cap N$, and let $d$ be the rank of~$F$.
Let~$X$ be a rank-$d$ covector in~$F$.
Then the covectors in $\Supp(\M)\setminus\bigcap_k(\M)$ that are above~$X$ in the lattice~$\L$ form a connected set.
Let~$Y$ be a covector of full rank in~$M$ and~$Z$ a covector of full rank in $N$.
Then there is a path $Y=X_0,X_1,\ldots,X_j=Y$ in $\Supp(\M)\setminus\bigcap_k(\M)$ consisting of covectors above~$X$.
Let $M_0=M$ and $M_j=N$ and for each $i=1,\ldots,j-1$, let $M_i$ be any polyhedron in $\M$ with $X_i\in M_i$.
The result is a sequence $M=M_0,\ldots,M_j=N$ of polyhedra in $\M$, all containing~$X$, such that, for each $i=1,\ldots,j$, the intersection $M_{i-1}\cap M_i$ is of rank greater than~$k$.
As in the proof of Theorem~\ref{shortcut}, we see that $F'=M_0\cap\cdots\cap M_j$ is a face of~$N$.
By construction, $F'$ contains~$X$ and so has rank at least~$d$.
Since $F'\subseteq F$, it has rank exactly~$d$.

Let $G=N\cap H$ be any exposed face of $N$ containing $F'$.
Then $F'\subseteq H$.
Since $F$ and $F'$ are rank-$d$ polyhedra with $F'\subseteq F$, we have $F\subseteq H$ and so $F\subseteq G$.
Thus $F\subseteq F'$, so~$F=F'$ is a face of~$N$.
By symmetry,~$F$ is a face of~$M$.
\end{proof}

If the rank of $\L$ is $n$, then $\set{X\in\L:\rank(X)\in\set{n-1,n}}$ is connected by \cite[Proposition~4.2.3]{OrientedMatroids}.
The following proposition is \cite[Proposition~4.2.6]{OrientedMatroids}.

\begin{prop}\label{OM prop}
Let $P$ be an order ideal in~$\L$ generated by a set of covectors of full rank $n$.
Then $P$ is a polyhedron if and only if the following condition holds:
If $Y$ and $Z$ are maximal in $P$ and $X$ is on a shortest path from $X$ to $Y$ in $\Set{X\in \L:\rank(X)\in\set{n-1,n}}$, then $X\in P$.
\end{prop}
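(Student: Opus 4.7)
The plan is to prove the two implications separately; the forward direction is routine, while the reverse is the substantive content.

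For the forward direction ($P$ a polytope $\Rightarrow$ shortest-path condition), it suffices to verify the property for a single closed halfspace $H_e^+$, since shortest-path closure is preserved under intersection. If $Y$ and $Z$ are full-rank covectors in $H_e^+$, their $e$-components lie in $\set{+,0}$, so $H_e$ is not in the separation set $S(Y,Z)$. A shortest path in $\set{X:\rank(X)\in\set{n-1,n}}$ from $Y$ to $Z$ has length exactly $|S(Y,Z)|$, and each rank-$(n-1)$ covector on the path lies on some hyperplane of $S(Y,Z)$. In particular, no covector along the path can have $e$-component $-$, so all covectors on the path lie in $H_e^+$.

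For the reverse direction, let $Q$ be the intersection of all closed halfspaces of $\L$ containing $P$. Then $Q$ is a polytope, $P\subseteq Q$, and both are order ideals in~$\L$. It therefore suffices to show that every rank-$n$ covector of $Q$ lies in $P$. I would argue by contradiction: choose a full-rank $W\in Q\setminus P$ minimizing the distance $d$ to the set of full-rank covectors of $P$ in the rank-$\set{n-1,n}$ adjacency graph, and let $Y$ be a maximal element of $P$ achieving this distance. Fix a shortest path from $Y$ to $W$ and let $Y'$ be the first rank-$n$ covector after $Y$ on that path. Then $d(Y',W)=d-1$, so $Y'\notin P$; the hyperplane $H_e$ crossed in passing from $Y$ to $Y'$ cannot bound a halfspace containing $P$ and containing $Y$ (otherwise $W\in Q$ would force $W_e=Y_e$, contradicting $e\in S(Y,W)$). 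Hence for each $e\in S(Y,W)$ there exists a maximal $Z^{(e)}\in P$ with $Z^{(e)}_e$ opposite to $Y_e$.

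The main obstacle is to combine the witnesses $Z^{(e)}$, one per $e\in S(Y,W)$, into a single maximal $Z\in P$ satisfying $S(Y,Z)\supseteq S(Y,W)$ and $S(W,Z)=S(Y,Z)\setminus S(Y,W)$. Once such a $Z$ is in hand, $W$ sits on a shortest path from $Y$ to $Z$ in the rank-$\set{n-1,n}$ subgraph, and the hypothesis forces $W\in P$, giving the desired contradiction. I expect to produce $Z$ by iteratively applying the shortest-path hypothesis (which $P$ itself satisfies) to pairs of witnesses $Z^{(e)},Z^{(e')}$ and invoking the covector composition and elimination axioms of oriented matroids, as in the treatment of convex closure in \cite[Chapter~4]{OrientedMatroids}.
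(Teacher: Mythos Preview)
The paper does not prove this proposition at all; it simply cites it as \cite[Proposition~4.2.6]{OrientedMatroids}. So there is no in-paper proof to compare against, only the question of whether your argument stands on its own.

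Your forward direction is fine. In the reverse direction you have the right setup but then manufacture an obstacle that is not actually there. You already proved that a polytope is shortest-path closed; apply this to $Q$ itself. Any shortest path from a tope $Y\in P\subseteq Q$ to your chosen $W\in Q\setminus P$ therefore stays inside $Q$, so the first tope on that path leaving $P$ is still in $Q$. In other words your minimal distance is $d=1$: you may assume $W$ is adjacent to $Y$, with $S(Y,W)=\{e\}$ a singleton. Now a single witness $Z=Z^{(e)}\in P$ with $Z_e=W_e$ suffices: since $S(Y,W)=\{e\}$ and $e\notin S(W,Z)$, we have $S(Y,W)\cap S(W,Z)=\emptyset$, so $W$ lies on a shortest path from $Y$ to $Z$, and the hypothesis forces $W\in P$. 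That is the contradiction, and the proof is complete. There is no need to combine several $Z^{(e)}$, and the appeal to composition and elimination you flag as ``the main obstacle'' is unnecessary.

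Two minor points. First, your justification ``$d(Y',W)=d-1$, so $Y'\notin P$'' is correct, but the next sentence only produces a witness for the single $e$ separating $Y$ from $Y'$; your claim ``for each $e\in S(Y,W)$'' needs the direct argument (if $P\subseteq H_e^{Y_e}$ then $Q\subseteq H_e^{Y_e}$, contradicting $W\in Q$) rather than the $Y'$ argument. Second, in the forward direction, full-rank covectors have no zero entries, so $Y_e,Z_e\in\{+\}$ rather than $\{+,0\}$; this does not affect the argument.
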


Paths and path convexity in~$\L$ are defined as in the realizable case.
Replacing Theorem~\ref{red path prop} with \cite[Theorem~2.4]{CorMor} and establishing, by the same proof, the analog of Lemma~\ref{red path enough}, we obtain the following theorem.
(Cf. \cite[Proposition~4.4.6]{OrientedMatroids}.)

\begin{theorem}\label{path conv OM}
Every oriented matroid is path convex.
\end{theorem}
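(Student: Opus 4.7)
The plan is to mirror the two-step argument that gave Theorem~\ref{path conv}. For regions (covectors of full rank) $Q,R\in\L$, define $S(Q,R)$ to be the set of elements $e\in E$ on which $Q$ and $R$ carry opposite signs. Two regions are adjacent exactly when their meet in~$\L$ is a covector of corank~$1$, and in that case they disagree on a single element. Thus any path from~$Q$ to~$R$ has length at least $|S(Q,R)|$, and the oriented-matroid analog of Lemma~\ref{red crit} --- that a path is reduced if and only if its length equals $|S(Q,R)|$ --- follows once we know that a path of that length exists, which is a standard consequence of connectivity of the tope graph and, in particular, of \cite[Theorem~2.4]{CorMor}.

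Next, I would transcribe the proof of Lemma~\ref{red path enough} essentially verbatim. Given a path $\gamma=(R_0,\ldots,R_m)$ from $Q$ to $R$ and any reduced path~$\rho$ from $Q$ to $R$, if $\gamma$ is not reduced, let $k$ be the smallest positive integer with $k>|S(R_0,R_k)|$; then $R_0,\ldots,R_{k-1}$ is reduced and $|S(R_0,R_k)|=k-2$. Applying \cite[Theorem~2.4]{CorMor} to the two reduced paths from $R_0$ to $R_{k-1}$ --- namely $R_0,\ldots,R_{k-1}$ and a path of the form $R'_0,\ldots,R'_{k-2},R_{k-1}$ with $R'_{k-2}=R_k$ --- yields a sequence of braid moves whose application to $\gamma$, followed by a single nil move that collapses the newly-created subsequence $R'_{k-2},R_{k-1},R_k$ back to $R_k$, strictly shortens $\gamma$. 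Iterating, we transform $\gamma$ into a reduced path by braid and nil moves; a final appeal to \cite[Theorem~2.4]{CorMor} relates that reduced path to~$\rho$ by braid moves alone.

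The only non-routine input is the oriented-matroid reduced-path connectedness theorem from \cite{CorMor}; everything else is purely combinatorial and depends only on the single-sign-flip characterization of region adjacency in~$\L$. In that sense there is no substantive new obstacle --- the main care required is to check that the proofs of Lemmas~\ref{red crit} and~\ref{red path enough} never invoke the geometry of~$\reals^n$ beyond that adjacency property, which a line-by-line inspection of Section~\ref{path sec} confirms.
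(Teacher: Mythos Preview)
Your proposal is correct and follows exactly the route the paper takes: replace Theorem~\ref{red path prop} by \cite[Theorem~2.4]{CorMor} and carry over the proof of Lemma~\ref{red path enough} verbatim. One small imprecision: adjacent topes differ on a single \emph{parallelism class} of ground-set elements rather than a single element, so $S(Q,R)$ should be interpreted as a set of hyperplanes (parallel classes) rather than elements of~$E$; with that adjustment the analog of Lemma~\ref{red crit} goes through as you describe.
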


Let $F$ be a corank-$2$ face of a polyhedral complex $\C$ in $\L$.
Define $\Perp(F)$ to be the rank-$2$ oriented matroid obtained by the deletion from $\L$ of all $e\in E$ such that $H_e$ does not contain~$F$.
Since every rank-$2$ oriented matroid is realizable, we can think of $\Perp(F)$ as a fan.
For each $M\in\M$ with $F\subset M$, define a cone in $\Perp(F)$ as the set of covectors in $\Perp(F)$ which arise by deletion from covectors in $M$.
The fan $\C|_F$ is the collection of all such cones.
The ridge property on a set $\E$ of edges in the adjacency graph $\G$ is now defined exactly as in the realizable case.

\begin{theorem}\label{coarsen OM}
Let~$\C$ be a pure, full-rank, locally strongly connected polyhedral complex in an oriented matroid~$\L$.
Then a subset~$\E$ of the edges of the adjacency graph~$\G$ is the edge set of a complex coarsening~$\C$ if and only if~$\E$ has the ridge property.
\end{theorem}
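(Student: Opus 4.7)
The plan is to mirror the proof of Theorem~\ref{coarsen} step by step, substituting the oriented matroid analogs of each ingredient. The ``only if'' direction goes through verbatim: if $\E$ is the edge set of a coarsening $\C'$ that fails the polygon property at some corank-$2$ face $F$ of~$\C$, then the restricted fan $\C'|_F$ in $\Perp(F)$ exhibits either a non-convex maximal face of $\C'$ or two maximal faces whose intersection fails to be a face.

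For the ``if'' direction, I would first treat the special case in which $\C$ is the ``finest'' subcomplex of the universal complex of~$\L$ supported on $\Supp(\C)$, i.e.\ the subcomplex whose maximal faces are the principal order ideals of full-rank covectors in $\Supp(\C)$. The OM analog of Lemma~\ref{weak poly convex} is then proved by the same argument, using Theorem~\ref{path conv OM} in place of Theorem~\ref{path conv}: each connected component $M$ of $\E$ is convex in the sense that any reduced path between two full-rank covectors of $M$ lies in $\E$, because any path in $\E$ connecting them can be converted into such a reduced path by braid moves (which preserve $\E$ by the weak polygon property) and nil moves (which preserve $\E$ trivially). Proposition~\ref{OM prop} then identifies the order ideal generated by $M$ as a polyhedron in~$\L$. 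The OM analog of Lemma~\ref{codim 1 intersections} follows by the same polygon-propagation argument across corank-$2$ faces lying in a common corank-$1$ face; because $\Perp(F)$ is rank~$2$ and hence realizable, the geometric picture around each corank-$2$ face is unchanged. Corollary~\ref{shortcut convex OM} then completes the special case.

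For the general case, let $\oC$ be the subcomplex of the universal complex of $\L$ whose support equals $\Supp(\C)$, so that $\C$ coarsens~$\oC$. Transfer $\E$ to a set $\oE$ of edges of the adjacency graph of~$\oC$ by the same recipe as in the realizable case: a corank-$1$ face of $\oC$ contained in a corank-$1$ face $F$ of~$\C$ enters $\oE$ iff $F\in\E$; a corank-$1$ face of $\oC$ lying in the relative interior of a maximal face of~$\C$ always enters $\oE$. Checking the three possible placements of a corank-$2$ face $\oF$ of $\oC$ relative to~$\C$ shows that $\oE$ has the polygon property. The special case then yields a complex $\C'$ coarsening~$\oC$, which also coarsens~$\C$, and whose edge set as a coarsening of~$\C$ is exactly~$\E$.

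The main obstacle is the OM analog of Lemma~\ref{weak poly convex}: one must carefully verify that Proposition~\ref{OM prop} applies to the order ideal generated by a connected component of~$\E$. This requires identifying reduced paths in the adjacency graph with shortest paths in $\set{X\in\L:\rank(X)\in\set{n-1,n}}$ and using path convexity to transport arbitrary $\E$-paths to reduced paths without leaving~$\E$. The remainder of the argument is bookkeeping fully analogous to the realizable setting.
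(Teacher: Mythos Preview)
Your proposal is correct and follows essentially the same route as the paper: the paper also mirrors the proof of Theorem~\ref{coarsen}, invoking Proposition~\ref{OM prop} in place of line-segment arguments to establish the oriented-matroid analogs of Lemmas~\ref{weak poly convex} and~\ref{codim 1 intersections}, then applying Corollary~\ref{shortcut convex OM}, and finally reducing the general case to the special case $\C=\oC\subseteq\C(\L)$ via the same transfer $\E\mapsto\oE$. The only point you leave implicit that the paper makes explicit is that, in the analog of Lemma~\ref{codim 1 intersections}, the line segment $\overline{xx'}$ used to produce the chain of corank-$1$ faces inside $F$ must be replaced by an appeal to Proposition~\ref{OM prop} for the restriction of $\L$ to the hyperplane containing~$F$.
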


The proof of Theorem~\ref{coarsen OM} runs along the same lines as the proof of Theorem~\ref{coarsen}.
First, the oriented-matroid analog of Lemma~\ref{weak poly convex} holds by essentially the same proof, but instead of considering a line segment between points in $M$, we appeal to Proposition~\ref{OM prop}.
The analog of Lemma~\ref{codim 1 intersections} also holds by the same proof, except that instead of using a line segment $\overline{xx'}$ to construct a sequence of corank-$1$ covectors contained in $F$, we appeal to Proposition~\ref{OM prop}, applied to the restriction of~$\L$ to the hyperplane containing $F$.

Let~$\C$ be a pure, full-rank, locally strongly connected polyhedral complex in~$\L$.
Given a covector $X$, the notation $\oX$ will refer to the polyhedron which is the principal order ideal below $X$ in~$\L$.
Thus $\oX$ should be thought of as the closure of~$X$, while~$X$ should be thought of as the relative interior of $\oX$.
Let $\C(\L)$ be the polyhedral complex in~$\L$ consisting of all polyhedra $\oX$ such that~$X$ is a covector in~$\L$.
Let~$\oC$ be the subcomplex of $\C(\L)$ consisting of faces contained in $\Supp(\C)$.
Then~$\C$ coarsens~$\oC$. 
As in the realizable case, we begin with a set $\E$ of edges in the adjacency graph of $\C$, with the ridge property, and construct a set~$\oE$ of edges of the adjacency graph of~$\oC$.
We then argue, along the same lines, that~$\oE$ has the ridge property.
Using the oriented-matroid analogs of Lemmas~\ref{weak poly convex} and~\ref{codim 1 intersections} as well as Proposition~\ref{OM prop}, we verify the hypotheses of Theorem~\ref{shortcut OM} and conclude that~$\oE$ is the edge set of a complex~$\C'$ coarsening~$\oC$.
The argument that $\E$ is the edge set of $\C'$ as a coarsening of $\C$ extends to the oriented matroid case, and this completes the proof of Theorem~\ref{coarsen OM}.

Echoing the proof of Theorem~\ref{Tietze lite}, with $\C(\A)$ replaced by $\C(\L)$, we obtain an oriented matroid version of Tietze's convexity theorem.

\begin{theorem}\label{Tietze OM}
Let $\M$ be a finite set of polyhedra of full rank in~$\L$.
Suppose:
\begin{enumerate}
\item[(i) ] The interior of $\Supp(\M)$ is connected; and
\item[(ii) ] For all~$X$ in the boundary of $\Supp(\M)$, there exists a closed halfspace $H^+$ bounded by a hyperplane $H$ such that $X\in H$ and $\bigcup_{x\in M\in\M}M\subseteq H^+$.
\end{enumerate}
Then~$\Supp(\M)$ is a polyhedron in~$\L$.
\end{theorem}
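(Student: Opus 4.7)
The plan is to follow the proof of Theorem~\ref{Tietze lite} nearly verbatim, replacing $\C(\A)$ by $\C(\L)$ and using the oriented-matroid analogs of Lemma~\ref{weak poly convex} and Corollary~\ref{shortcut convex OM} that were set up inside the proof of Theorem~\ref{coarsen OM}. Concretely, write $n$ for the rank of~$\L$, let $\oM$ be the set of rank-$n$ covectors lying in~$\Supp(\M)$, and let $\oE$ be the set of edges $Y$---$Z$ of the adjacency graph of $\C(\L)$ with both endpoints in~$\oM$. The goal is to identify~$\oE$ as the edge set of a pre-complex one of whose maximal faces has support exactly~$\Supp(\M)$.

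First I would check that $\oM$ is a connected component of~$\oE$. The interior of $\Supp(\M)$ is an up-set in~$\L$ (if every covector above $X$ lies in $\Supp(\M)$, the same holds for every covector above any covector above~$X$), and hypothesis~(i) says it is connected. Applying the top-two-ranks-connectivity argument of \cite[Proposition~4.2.3]{OrientedMatroids} inside this up-set, one can connect any two elements of~$\oM$ by a sequence of rank-$(n{-}1)$ and rank-$n$ covectors staying in the interior, which is exactly a path in~$\oE$. Conversely, no edge of~$\oE$ can leave~$\oM$ by the definition of~$\oE$.

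Next I would verify the weak polygon property of~$\oE$. Let~$\P$ be a $2k$-gon in the adjacency graph, coming from a corank-$2$ covector~$F$, and suppose that some $k$ consecutive edges of~$\P$ lie in~$\oE$ but not all of them do. Then $k+1$ consecutive rank-$n$ covectors above~$F$ lie in~$\oM$ while at least one rank-$n$ covector above~$F$ is missing from~$\oM$, so~$F$ lies on the boundary of~$\Supp(\M)$. Hypothesis~(ii) applied at~$F$ yields some $e\in E$ with $F_e=0$ and a closed halfspace (say $H_e^+$) such that every $M\in\M$ with $F\in M$ is contained in $H_e^+$; since polyhedra in~$\L$ are down-closed in the big face lattice, every rank-$n$ covector $X\geq F$ with $X\in\oM$ automatically lies in $H_e^+$. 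Restricting to the rank-$2$ matroid $\Perp(F)$, the element~$e$ splits the $2k$-gon antipodally, so that $H_e^+$ contains exactly $k$ consecutive regions and $H_e^-$ the opposite $k$; hence $k+1$ consecutive rank-$n$ covectors above~$F$ cannot all lie in~$H_e^+$, the required contradiction.

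With this in hand, the oriented-matroid analog of Lemma~\ref{weak poly convex} set up in the proof of Theorem~\ref{coarsen OM} shows that~$\oE$ is the edge set of a pre-complex~$\C'$ coarsening~$\C(\L)$ whose maximal faces are polyhedra with pairwise disjoint interiors. Because $\oM$ is one connected component of~$\oE$, it indexes a single maximal face of~$\C'$, whose support is exactly~$\Supp(\M)$; so $\Supp(\M)$ is a polyhedron in~$\L$, as required. The step I expect to require the most care is the weak polygon property argument, specifically the claim that restricting an oriented-matroid halfspace $H_e^+$ to~$\Perp(F)$ cleaves the $2k$-gon into two antipodal arcs of exactly $k$ regions. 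This is ultimately a consequence of realizability of rank-$2$ oriented matroids but needs to be spelled out cleanly in the oriented-matroid language, including handling the bookkeeping for parallel elements in $\Perp(F)$.
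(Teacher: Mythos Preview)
Your proposal is correct and follows essentially the same approach as the paper, which proves Theorem~\ref{Tietze OM} in one line by ``echoing the proof of Theorem~\ref{Tietze lite}, with $\C(\A)$ replaced by $\C(\L)$.'' Your write-up is in fact more explicit than the paper's, and the step you flag---that a halfspace $H_e^+$ restricted to $\Perp(F)$ cuts the $2k$-gon into two antipodal $k$-arcs---is exactly the oriented-matroid bookkeeping one must do; the only other place to be careful is your appeal to \cite[Proposition~4.2.3]{OrientedMatroids} ``inside the up-set,'' where one should note that the interior, being an order filter, has above each of its covectors the full face lattice of a contracted oriented matroid, so a Hasse-diagram path in the interior can be lifted to the top two ranks star by star.
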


\begin{remark}
When $\L$ is realizable, polyhedral complexes in $\L$, as defined above, are \emph{fans} rather than polyhedral complexes.
One can similarly define polyhedral complexes and prove all of the results of this section in the context of \newword{affine oriented matroids} as in \cite[Section~4.5]{OrientedMatroids}.
As usual, the decision to pass from the linear to the affine or vice-versa is a matter of convenience, not a matter of mathematical substance.
\end{remark}

\section*{Acknowledgments}
The author gratefully acknowledges the contributions of several colleagues:
Bernd Sturmfels made the author aware of the problem of generalizing \cite[Theorem~9]{ranktests} from the symmetric group to arbitrary finite Coxeter groups and contributed helpful comments.
Ezra Miller suggested that the argument for Theorem~\ref{coarsen} should imply some local condition for convexity (see Theorem~\ref{Tietze lite}).
Sergei Ivanov answered the author's question on \texttt{mathoverflow.net}, making the author aware of Tietze's convexity theorem and giving a reference.
Isabella Novik, Igor Pak, Vic Reiner, and an anonymous referee contributed helpful comments and references.
In an earlier version of this paper, Theorems~\ref{coarsen} and~\ref{coarsen OM} were proved under the stronger hypothesis that $\C$ has convex support.
The referee pointed out that the argument goes through, with only trivial modifications, for $\C$ pure, full-dimensional, and locally strongly connected.

\end{document}